\pgfplotsset{compat=1.15}
\theoremstyle{plain}
\newtheorem{thm}{Theorem}[section]
\newtheorem{prop}{Proposition}[thm]
\newtheorem{lem}{Lemma}[thm]
\newtheorem{cor}{Corollary}[thm]
\newtheorem{conjecture}{Conjecture}[thm]
\newcommand{\rth}{\rho_{_{\widehat{T}}}}
\newcommand{\Th}{\widehat{\mathbb{T}}}
\newcommand{\T}{\mathbb{T}}
\newcommand{\rt}{\rho_{_{_T}}}
\newcommand{\R}{\mathbb{R}}
\newcommand{\oph}{\: \widehat{\oplus}\: }
\newcommand{\odh}{\: \widehat{\odot}\: }
\newcommand{\od}{\odot}
\newcommand{\wt}{\widehat{\Theta}}
\begin{document}

\title{Dot Product Representations of Graphs Using Tropical Arithmetic \thanks{\indent Key words and phrases :Tropical arithmetic, graph, dot product dimension }\thanks{\indent AMS Classification: 05C62, 05C78, 15A80} } \author{Sean Bailey\footnote{Corresponding Author} \footnote{Email: sbailey@tamut.edu} \footnote{Postal Address: 7101 University Ave, Texarkana, TX 75503}\\Department of Mathematics\\Texas A\& M University Texarkana\\ Texarkana, TX 75503 \\
David Brown\footnote{Email: david.e.brown@usu.edu}\\Department of Mathematics and Statistics\\Utah State University\\Logan, Utah 84322-3900
\\Michael Snyder\\Mathematics Department\\ Snow College\\ Ephraim, Utah 84627
\\Nicole Turner\footnote{Email: nturner@lcps.org}\\ Rock Ridge High School\\ Ashburn, Virginia 820148, USA  }

\maketitle

\begin{abstract} 
A dot-product representation of a graph is a mapping of its vertices to  vectors of length $k$ so that vertices are adjacent if and only if the inner product (a.k.a. dot product) of their corresponding vertices exceeds some threshold. Minimizing dimension of the vector space into which the vectors must be mapped is a typical focus. 
We investigate this and structural characterizations of graphs whose dot product representations are mappings into the tropical semi-rings of min-plus and max-plus.
We also observe that the minimum dimension required to represent a graph using a \emph{tropical representation} is equal to the better-known threshold dimension of the graph; that is, the minimum number of subgraphs that are threshold graphs whose union is the graph being represented.
\end{abstract}

\section{Introduction}
The dot product representation (DPR) of a graph was introduced in papers by Reiterman, et al \cite{Reit89} and Schienerman, et al \cite{Fid98} independently. 
Among many others, one motivation for studying DPRs lies in determining the minimum amount of information needed to completely represent a graph. 
In \cite{Fid98} and \cite{Reit89} representing graphs using real-number vector spaces was studied. 
Here we study DPRs in another algebra, the so called tropical semi-ring, and study the minimum dimension 
required for a representation and give structural results and characterizations of graphs that can be represented using  tropical vectors of length $k$, for various values of $k$.

\section{Preliminaries}
\subsection{Basic Definitions}
A \emph{graph} $G$ is an ordered pair $(V,E)$ where $V = V(G)$ is a set of elements called \emph{vertices}, and $E = E(G)$ is a set of 2-element subsets of $V$ called \emph{edges}. Each edge can be seen as a representation of some symmetric relation between it's two constituent vertices, these vertices are referred to as the edge's endpoints. Two vertices that constitute an edge are said to be \emph{adjacent} and are \emph{neighbors} of each other. If two vertices are adjacent
this may be denoted with the juxtaposition of their names. We say a vertex is \emph{incident} to an edge if and only it is one of the endpoints of that edge.

A \emph{path} is a sequence of distinct vertices (except for possibly the first and last) with each two consecutive vertices in the sequence being adjacent. The \emph{length} of a path is the number of edges used in the path. The \emph{distance} between two vertices is the length of the shortest path between them. We will use the notation $P_n$ for a graph that is a path on $n$ vertices. A graph is \emph{connected} if there is a path between every pair of vertices in the graph. 

If the first and last vertices of a path are the same the path is called a \emph{cycle}. A $n-$cycle is a cycle on $n$ vertices, denoted as $C_n$. $C_3$ and $C_4$ are called a \emph{triangle} and a \emph{square} respectively. A \emph{chord} is an edge that connects two nonadjacent vertices of a path or cycle. A graph is called \emph{chordal} if it has no chordless cycles.

Given two graphs, $G = (V,E)$ and $H = (W, F)$, we say $H$ is a \emph{subgraph} of G if $W \subseteq V$ and $F \subseteq E$. A graph generated from another graph by deleting a set of vertices and the edges incident to those vertices is called an \emph{induced subgraph}.

When every pair of vertices in a graph are adjacent we say that the graph is \emph{complete}. A complete graph on $n$ vertices is denoted $K_n$. Every graph is a subgraph of the complete graph on the same number of vertices, but not every graph is an induced subgraph of a complete graph. When a complete graph is a subgraph of a larger graph those vertices form a \emph{clique}, a set of pairwise adjacent vertices. The opposite of a \emph{clique} is an independent set, vertices which are pairwise nonadjacent. The size of the largest independent set in a graph $G$ is denoted as $\alpha(G)$.

The \emph{complement} of a graph $G = (V,E)$ is the graph $\bar{G} = (V, F)$ such that for any $v_i, v_j \in V$ we have $v_iv_j\in F$ if and only if $v_iv_j \notin E$. The \emph{union} of graphs $G_1 = (V_1,E_1)$ and $G_2 = (V_2,E_2)$ is the graph $G_1\cup G_2 = (V_1\cup V_2,E_1\cup E_2)$. The \emph{intersection} of $G_1$ and $G_2$ is the graph $G_1\cap G_2 = (V_1\cap V_2,E_1 \cap E_2)$.

\subsection{Graph Classes}
By analyzing structures in the graph we can classify graphs and introduce parameters that can be compared across different classes. Many classes of graphs deal with the connectivity of the graph, others with the presence or absence of certain structures. We shall review some of these classes and parameters that are relevant to this paper.

A\emph{k-partite graph} is one in which the vertices can be partitioned into $k$ independent (partite) sets such that all edges in $G$ join vertices from different partite sets. A frequently studied class is bipartite graphs, i.e. 2-partite graphs. Because partite graphs require independent sets there can be no complete $k$-partite graphs, $k\ge 2$, using the classical definition of complete; therefore we adopt a modified concept of completeness when discussing them. A $k$-partite graph is considered complete when every pair of vertices not in the same partite set are adjacent. Complete k-partite graphs are denoted similarly to complete graphs, $K_{n_1,n_2,\cdots,n_k}$ , where $n_1, n_2,\cdots,n_k$ are the sizes of the partite sets.  A $K_{1,m}$, with $m \ge 1$, is called a \emph{star}. If $v$ is the vertex in the partite set of size $1$ the star is centered at $v$.

A \emph{tree} is a connected graph with no cycles.A \emph{caterpillar} is a tree with a single path containing at least one endpoint of every edge. A vertex not on this path is called a \emph{leaf}. A star is a tree consisting of a central vertex and $n-1$ leaves adjacent to it. All trees are also bipartite graphs.  Since trees have no cycles they are chordal by default.

Threshold graphs are another class of graphs that are pertinent to the subject of this paper. While there are many characterizations and definitions of threshold graphs
we include only three. For a graph $G = (V,E)$ the following are equivalent:
\begin{enumerate}
\item $G$ is a threshold graph;
\item There are real weights $w(i)$, $i\in V$ , and a threshold value $t$ such that $w(i)+w(j)\ge t$ if and only if $ij \in E$ (We may refer to the function $w : V(G)\to \R$ as the \emph{threshold realization});
\item $G$ can be built sequentially from the empty graph by adding vertices one at a time where each new vertex, when added, is either isolated or adjacent to every existing
vertex;
\item There is no induced subgraph C4, P4, or 2K2 (shown in Figure \ref{fig:threshforbid}).
\begin{figure}[h]
\centering
\includegraphics[scale=1]{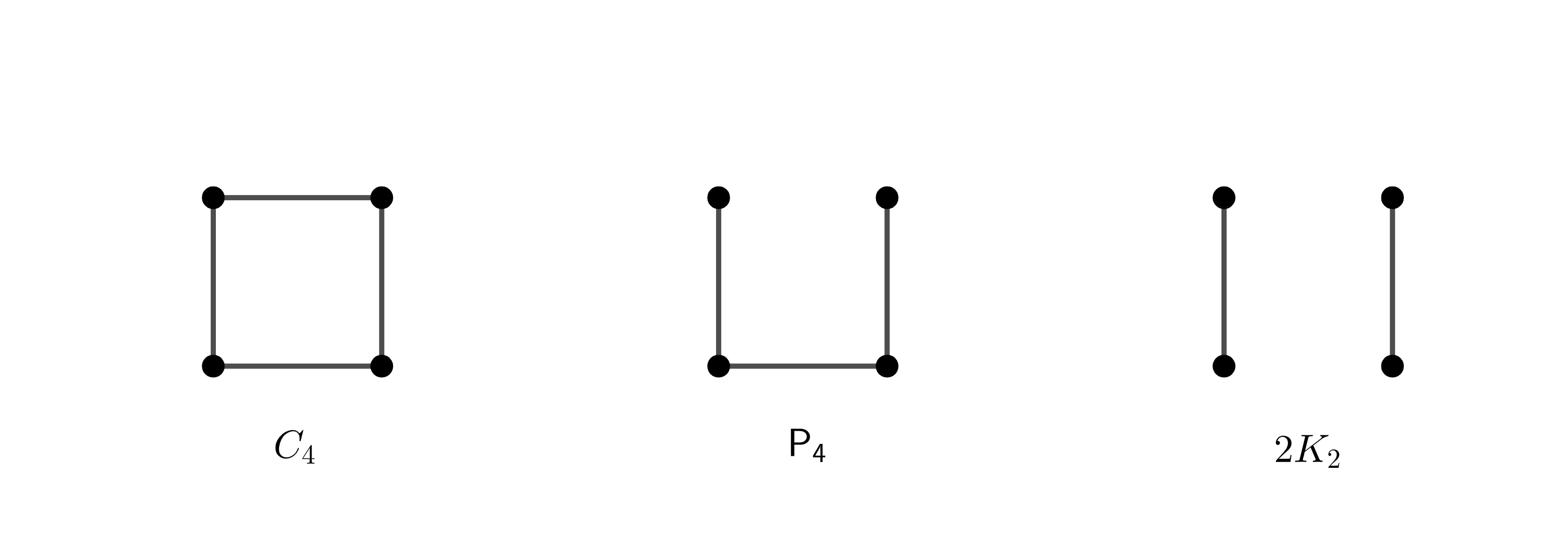}
\caption{Forbidden Induced Subgraphs of Threshold Graphs}
\label{fig:threshforbid}
\end{figure}
\end{enumerate}

In this work we will be primarily concerned with the definition 2 given above. However, definition 4 gives us the result that the complement of a threshold graph is a
threshold graph, since the set $\{C_4, P_4, 2K_2\}$ is closed under complementation.\cite{Reit89}

The \emph{threshold dimension} of $G$ is the minimum number $k$ of threshold subgraphs $T_1,\cdots, T_k$ of $G$ such that the union of these graphs yields G and is denoted $\Theta(G)$. The threshold dimension is well defined, since a single edge along with isolated vertices is a threshold subgraph of $G$, and is bounded above by $|E|$. The problem of finding the threshold dimension of a graph has been shown to be NP-hard and has been of interest for decades, see \cite{Chvatal}. Though generally finding the threshold dimension of a graph is hard it is known for many graphs and classes of graphs. When the threshold dimension
is not known a bound may be established that is based on the structure of the graph, or a well-known parameter such as $\alpha(G)$. Here is one such from \cite{Chvatal} whose proof is given as an example of the arguments made for such results.
\begin{thm}
(\cite{Chvatal}) For every graph $G$ on $n$ vertices we have $\Theta(G)\le n-\alpha(G)$. Furthermore, if $G$ is triangle-free, then $\Theta(G) = n - \alpha(G)$.
\end{thm}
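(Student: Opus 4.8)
The plan is to prove the two claims separately. For the upper bound $\Theta(G) \le n - \alpha(G)$, I would exploit threshold definition 3 (the sequential construction), which tells me that a graph is threshold precisely when it can be built up by repeatedly adding either an isolated vertex or a dominating vertex. The key observation is that any star $K_{1,m}$ is a threshold graph, and more generally, if I fix a vertex $v$ together with all its incident edges and add the remaining vertices as isolated points, I obtain a threshold subgraph of $G$ that covers every edge incident to $v$. So the first step is to select a maximum independent set $I$ with $|I| = \alpha(G)$, and let $S = V \setminus I$ be the remaining $n - \alpha(G)$ vertices. Since $I$ is independent, every edge of $G$ has at least one endpoint in $S$. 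For each vertex $v \in S$, form the threshold subgraph $T_v$ consisting of $v$, all of $v$'s incident edges, and the rest of the vertices isolated. Then $\bigcup_{v \in S} T_v$ covers every edge of $G$, so these $n - \alpha(G)$ threshold graphs union to $G$, giving $\Theta(G) \le n - \alpha(G)$.

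For the equality in the triangle-free case, I need the matching lower bound $\Theta(G) \ge n - \alpha(G)$ when $G$ has no triangles. The idea is that in a triangle-free graph, each threshold subgraph can be "charged" against the independence number in a controlled way. A threshold graph that is triangle-free is quite restricted: since $K_3$ is forbidden in $G$, any threshold subgraph $T_i$ of $G$ is itself triangle-free, and a triangle-free threshold graph is a star plus isolated vertices (the sequential construction forces the non-isolated part to be a single dominating vertex together with independent neighbors, i.e. a star). So I would argue that each $T_i$ in a threshold decomposition contributes edges forming a star centered at some vertex $c_i$. The plan is then a counting/covering argument: if $G = T_1 \cup \cdots \cup T_k$ with each $T_i$ a star, then the set of non-center vertices across the decomposition, together with a careful accounting, must leave an independent set of size at least $n - k$, forcing $\alpha(G) \ge n - k$ and hence $k \ge n - \alpha(G)$.

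The cleanest way to make the lower bound precise is to choose a center vertex $c_i$ for each star $T_i$ and consider the set $C = \{c_1, \dots, c_k\}$ of chosen centers, which has at most $k$ elements. I would claim that $V \setminus C$ is an independent set: if two vertices $u, w \notin C$ were adjacent, the edge $uw$ lies in some $T_i$, but then $uw$ is an edge of the star $T_i$ whose center is $c_i \in C$, so one of $u, w$ must equal $c_i$, a contradiction. Hence $\alpha(G) \ge |V \setminus C| \ge n - k$, which rearranges to $k \ge n - \alpha(G)$. Combined with the upper bound, this yields $\Theta(G) = n - \alpha(G)$ in the triangle-free case.

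The main obstacle I anticipate is rigorously justifying that a triangle-free threshold subgraph must be a star (equivalently, that each threshold component contributes a single star), and handling the edge cases where a threshold subgraph has isolated edges or is edgeless. One must be careful that the "center" is well-defined and that a vertex is not forced to serve as center for two stars in a way that breaks the count; I would address this by noting that a triangle-free threshold graph has at most one vertex of degree exceeding one (its unique center), so each $T_i$ with at least one edge determines a center unambiguously, and threshold subgraphs contributing no edges can be discarded without increasing $k$. The upper bound is essentially routine once the star-covering idea is in place; the genuine content is the triangle-free lower bound and the verification that the independent-set-of-centers argument covers all cases.
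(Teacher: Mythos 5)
Your proposal is correct, and it is exactly the classical Chv\'atal--Hammer argument: cover the edges by stars centered at the vertices outside a maximum independent set for the upper bound, and for the triangle-free lower bound observe that every threshold subgraph of a triangle-free graph is a star plus isolated vertices, so the non-centers of any threshold cover form an independent set of size at least $n-k$. Note that the paper announces that a proof ``is given as an example'' but in fact no proof appears in the source, so there is nothing to compare against; your argument stands on its own, and the one step you flag as needing care (that a triangle-free, $\{2K_2,P_4,C_4\}$-free graph has all edges through a common vertex) is indeed the only nontrivial verification and is handled correctly by your case analysis.
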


\section{The Dot-Product Representation of a Graph}
Dot product graphs were independently developed by Reiterman et al \cite{Reit89} and Schienerman et al \cite{Fid98}. A graph $G=(V,E)$ is a \emph{$k$-dot product graph} if there exists a function $f:V\to\mathbb{R}^k$ with a real number $t>0$ such that for any $x,y\in V$ $xy\in E$ if and only if $f(x)\cdot f(y)\ge t$. The function $f$ is called a \emph{$k$ dot product representation} of $G$. We call $k$ the \emph{dimension} of the DPR. 

\begin{prop} (\cite{Fid98})
For every graph $G$ there is an integer $k$ so that $G$ is a $k$-dot product graph.
\end{prop}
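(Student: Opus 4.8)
The plan is to bypass any existential argument and simply exhibit an explicit dot-product representation of an arbitrary graph $G=(V,E)$, from which both the existence of a suitable $k$ and a concrete dimension bound follow at once. I would take $k=|E|$, index the coordinates of $\R^{k}$ by the edges of $G$, and fix the threshold $t=1$. For each vertex $v\in V$ define $f(v)\in\R^{|E|}$ by setting the coordinate corresponding to an edge $e$ equal to $1$ if $v$ is an endpoint of $e$, and equal to $0$ otherwise; in other words, $f$ simply records the edge-incidences of each vertex.

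The verification is the heart of the argument. For distinct vertices $v_i,v_j$ one computes
\[
f(v_i)\cdot f(v_j)=\sum_{e\in E}[\,v_i\in e\,]\,[\,v_j\in e\,],
\]
which counts the edges having both $v_i$ and $v_j$ as endpoints. Since every edge is a two-element set, at most one edge, namely $\{v_i,v_j\}$, can contain both of them; hence this sum equals $1$ when $v_iv_j\in E$ and $0$ otherwise. Consequently $f(v_i)\cdot f(v_j)\ge t=1$ holds exactly when $v_iv_j\in E$, which is precisely the defining condition for $G$ to be a $k$-dot product graph. This gives existence with $k=|E|$.

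Because the construction is so transparent, the only points demanding attention are the strict-versus-nonstrict threshold boundary (here the gap between the edge value $1$ and the non-edge value $0$ makes this automatic) and the fact that the condition never constrains $f(v)\cdot f(v)$, since there are no loops. For a sharper bound I would instead use a Gram-matrix route giving $k\le n=|V|$: form the symmetric matrix $M$ with off-diagonal entries $M_{ij}=2$ for edges and $M_{ij}=0$ for non-edges, and inflate the diagonal (e.g. $M_{ii}=2n$) so that $M$ is diagonally dominant and therefore positive semidefinite; then $M=F^{\top}F$ is a Gram matrix and the columns of $F$ serve as the vectors. The main obstacle along that alternative is guaranteeing positive semidefiniteness, which is exactly what the inflated diagonal secures, and this is legitimate precisely because the diagonal entries are squared norms that the adjacency condition leaves free.
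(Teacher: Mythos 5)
The paper offers no proof of this proposition at all --- it is stated as a cited result from Fiduccia, Scheinerman, Trenk, and Zito, and the text moves straight on to an example --- so there is nothing internal to compare your argument against; I can only judge it on its own terms, and it is correct. Your edge-incidence construction works: with coordinates indexed by $E$ and threshold $t=1$, the dot product $f(v_i)\cdot f(v_j)$ for distinct vertices counts the edges containing both, which is $1$ or $0$ according to adjacency, and the gap between these two values makes the threshold condition clean. You are also right that the quantifier should be read over distinct vertices, since $f(v)\cdot f(v)=\deg(v)$ would otherwise cause trouble; this is the standard convention and is the one the paper's own later constructions implicitly use. Two small remarks: the degenerate case $E=\emptyset$ gives $k=0$, which you should patch by taking $k=1$ and the zero vector; and your Gram-matrix alternative (inflating the diagonal of twice the adjacency matrix to force positive semidefiniteness) is essentially the classical existence argument from the cited reference, so it gives both the sharper bound $k\le |V|$ and the closest match to the ``official'' proof, while your incidence-vector version is the more elementary and self-contained of the two.
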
 
We give a more parsimonious representation in Figure \ref{fig:DPGexample}. The vectors can be seen to give a DPR of dimension 2 of the graph.

\begin{figure}[h]
\centering
\includegraphics[scale=1]{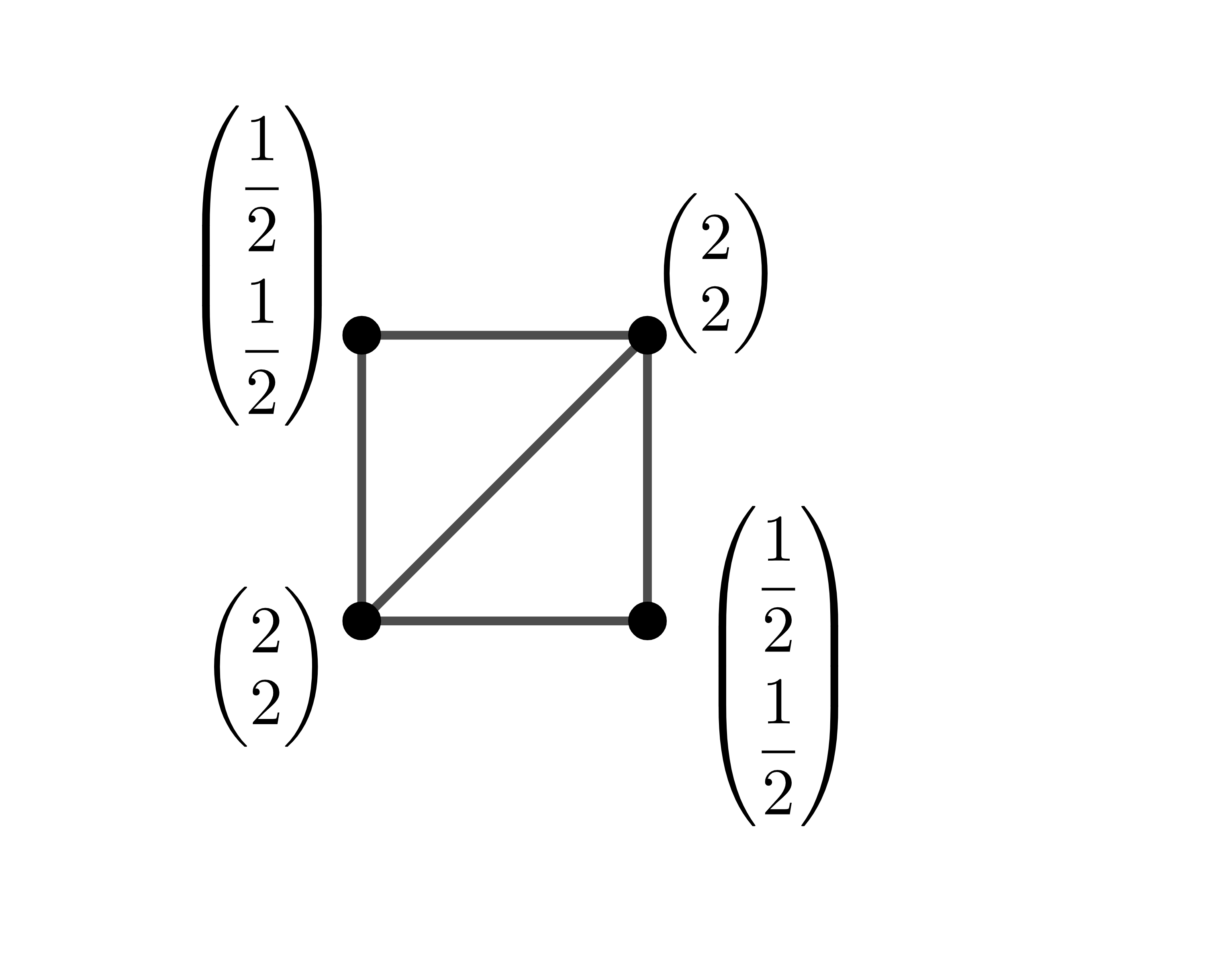}
\caption{A DPR of a Graph}
\label{fig:DPGexample}
\end{figure}

The minimum $k$ such that $G$ is a $k$-dot product graph is called the \emph{dot product dimension} of $G$, $\rho(G)$.

Computing the dot product dimension of a graph has been shown to be $NP$-hard \cite{Ross}, but the dot product dimension for specific classes of structured graphs has been determined. 
The following are some known results about dot product dimension:
\begin{itemize}
\item $\rho(G)\le 1$ if and only if $G$ has at most two non-trivial components that are each threshold graphs. \cite{Fid98}
\item $\rho(C_n)=2$ if $n\ge 4$. \cite{Fid98,Reit89}
\item $\rho(P_n)=2$ if $n\ge4$.  \cite{Fid98,Reit89}
\item If $G$ is an interval graph, then $\rho(G)\le 2$. \cite{Fid98,Reit89}
\item If $G$ is a tree, then $\rho(G)\le 3$. \cite{Fid98}
\item If $G=K_{n,m}$, then $\rho(G)=\min\{n,m\}$. \cite{Fid98}
\item Let $G=(V,E)$, $A\subset V$, and $K_A$ be the clique on $A$. Then $\rho(G\cup K_A)\le\rho(G)+1$. \cite{Reiterman1989}
\end{itemize}

\section{Tropical Arithmetic}
In this paper we consider min-plus and max-plus algebras, often called the tropical semirings. The \textit{min-plus} tropical semring, denoted $\T$, is define as $$(\R\cup \{\infty\}, \oplus, \otimes)$$ with operations $$ x\oplus y := min\{x,y\} \; \; \text{ and }\; \; x \otimes y := x+y.$$ The \textit{max-plus} tropical semiring, denoted $\Th$, is defined similarly $$(\R\cup \{-\infty\}, \oph, \otimes)$$ with $$ x\oph y := max\{x,y\} \; \; \text{ and }\; \; x \otimes y := x+y.$$ 
By letting the operator $\otimes$ take precedence when $\oplus$,$\oph$ and $\otimes$ occur in the same expression,
it is easy to see that associative, commutative, and distributive laws hold in tropical arithmetics.
Both operations also have identity elements with $$x\oplus \infty =x$$ $$x\oph -\infty =x$$ and $$x\otimes 0 =x.$$ 
Tropical division is defined to be classical subtraction so we make no special notation for this operation. 
Since there is no real number, $x$, such that $7\oplus x =11$ has a solution there is no tropical subtraction. 
Thus $\T$ and $\Th$ meet all of the ring axioms except for the existence of additive inverse, whence they are semirings. 
Perhaps not surprisingly $\T$ and $\Th$ are isomorphic \cite{Litv07}, meaning that there is a bijective homomorphism between them. 
That is, there exists some bijective function $f:\T \rightarrow \Th$ (or the other way around) such that for $a,b\in \T$ we have $f(a+b)=f(a)+f(b)$ and $f(ab)=f(a)f(b)$.
Essentially, this means the semirings are the same, up to labeling.

Other mathematical concepts translate naturally over to tropical arithmetic. 
Exponents can be treated as repeated multiplication (i.e. tropical exponentiation is classical multiplication), so in tropical algebras $\sqrt{-1}=-0.5$ because $(-0.5)^2=-0.5\otimes-0.5=-1$.
The operations of vector addition, scalar multiplication, matrix multiplication, and the dot-product of vectors over the tropical semirings are as they are over any field. 
The dot-product operation is of special importance to this work. 
We define the \textit{tropical dot-product} of two vectors of length $n$ as the tropical sum of the coordinate wise tropical products. 
We will use $\od$ to denote the operation of tropical dot product in $\T$ and $\odh$ in $\Th$.
Then given two vectors, $\vec{u}=[u_1, u_2, \ldots, u_n]$ and $\vec{v}=[v_1, v_2, \ldots, v_n]$, we have \begin{align*}
\vec{u} \odot \vec{v} := min\{u_1+v_1, u_2+v_2,\ldots, u_n+v_n\} & \text{ in }\T \\
\vec{u} \odh \vec{v} := max\{u_1+v_1, u_2+v_2,\ldots, u_n+v_n\} & \text{ in }\Th .
\end{align*} 

\noindent Any operation not specifically designated as tropical ($\oplus, \oph, \otimes, \od, \odh$) should be considered as arithmetic over reals. 

\section{Results}

\indent Given the direct translation of the classical dot product to tropical arithmetics the definition of a DPR changes little when done in tropical arithmetics. 
A \textit{$k$-tropical dot product representation of G} is a function $f:V\rightarrow \mathbb{T}^k$ such that, for any $x,y \in V$, $xy \in E$ if and only if $f(x) \odot f(y) \geq t$ or $f(x) \odh f(y) \geq t$. 
We will use the notation $\rho_{T}(G)$ and $\rho_{\widehat{T}}(G)$ to denote the tropical dot product dimension of $G$ using min-plus and max-plus algebras respectively. 
Note that although $\T$ and $\Th$ are isomorphic as rings $\rt$ and $\rth$ differ, but we ultimately show a connection between the two.
  
\begin{thm} Every graph has a min-plus tropical dot product representation.\end{thm}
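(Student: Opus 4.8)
The plan is to exploit the fact that a one-dimensional min-plus representation is nothing other than a threshold realization. Indeed, for length-$1$ vectors the tropical dot product in $\T$ reduces to $[u] \odot [v] = u + v$, so a map $f \colon V \to \T^1$ with threshold $t$ satisfies $xy \in E \iff f(x) \odot f(y) \ge t \iff f(x) + f(y) \ge t$, which is exactly condition (2) in the definition of a threshold graph. More generally, since the min-plus dot product of length-$k$ vectors is $\min_i (f(x)_i + f(y)_i)$, the inequality $f(x) \odot f(y) \ge t$ holds precisely when $f(x)_i + f(y)_i \ge t$ for \emph{every} coordinate $i$. Thus each coordinate defines a threshold graph $T_i$ on $V$, and the graph realized by $f$ is exactly the intersection $T_1 \cap \cdots \cap T_k$. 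Consequently, producing a min-plus representation of $G$ is equivalent to writing $G$ as an intersection of finitely many threshold graphs.

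First I would establish that every graph is such an intersection. Passing to the complement, $\bar G$ can be written as a union of threshold subgraphs: each single edge (together with the remaining vertices as isolated vertices) is a spanning threshold subgraph, so $\Theta(\bar G) \le |E(\bar G)| < \infty$. If $T_1, \ldots, T_m$ are spanning threshold subgraphs of $\bar G$ with $\bigcup_i T_i = \bar G$, then complementing gives $\bigcap_i \overline{T_i} = \overline{\bigcup_i T_i} = G$. Because the forbidden-subgraph family $\{C_4, P_4, 2K_2\}$ is closed under complementation, each $\overline{T_i}$ is again a threshold graph, and since $T_i \subseteq \bar G$ we have $\overline{T_i} \supseteq G$. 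Hence $G = \overline{T_1} \cap \cdots \cap \overline{T_m}$ expresses $G$ as an intersection of $m$ threshold graphs.

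It remains to assemble these into a single representation. Let $w_i \colon V \to \R$ with threshold $t_i$ be a threshold realization of $\overline{T_i}$, so that $xy \in E(\overline{T_i}) \iff w_i(x) + w_i(y) \ge t_i$. The only genuine nuisance is that the $t_i$ need not agree, whereas the tropical dot product compares all coordinates against one common threshold, so I would absorb this discrepancy by shifting each coordinate, setting $f(x)_i := w_i(x) - t_i/2$. Then $f(x)_i + f(y)_i \ge 0$ iff $w_i(x) + w_i(y) \ge t_i$, so $f(x) \odot f(y) = \min_i (f(x)_i + f(y)_i) \ge 0$ holds iff $xy \in E(\overline{T_i})$ for all $i$, that is, iff $xy \in E(G)$. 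This gives a min-plus representation with threshold $t = 0$, and a further uniform shift $f(x)_i \mapsto f(x)_i + s/2$ produces any desired positive threshold $s$. I expect the normalization of the several thresholds $t_i$ to a common value to be the only point requiring care; the conceptual heart of the argument — recognizing min-plus membership as the conjunction of coordinatewise threshold conditions, hence as an intersection of threshold graphs — is what drives the proof, and the finiteness of $k$ is immediate from the edge count of $\bar G$.
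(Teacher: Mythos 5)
Your proof is correct, but it takes a genuinely different route from the paper's. The paper gives a direct, self-contained construction in dimension $n$: it orders the vertices and defines coordinate $j$ of $f(v_i)$ explicitly (using $\infty$ for $j<i$, $t/3$ on the diagonal, and $t$ or $t/2$ for $j>i$ according to adjacency), then verifies the two dot-product inequalities by hand. You instead observe that membership in a min-plus representation is the conjunction of coordinatewise threshold conditions, so a $k$-dimensional representation is the same thing as writing $G$ as an intersection of $k$ threshold graphs; you then obtain such an intersection by covering $\overline{G}$ with single-edge threshold subgraphs and complementing, and finish by normalizing the several threshold values to a common one. This is exactly the circle of ideas the paper develops \emph{later} (the theorem $\rt(G)=\wt(G)$ and the identity $\rt(G)=\Theta(\overline{G})$ via de Morgan), so your argument front-loads the structural content: it explains \emph{why} representations exist and, if you replace the edge-by-edge cover with a minimum threshold cover of $\overline{G}$, it even yields the optimal dimension $\rt(G)=\Theta(\overline{G})$. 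The trade-offs are that your dimension as stated is $|E(\overline{G})|$, which can be quadratic in $n$ versus the paper's $n$, and you should dispose of the degenerate case $E(\overline{G})=\emptyset$ (i.e.\ $G$ complete), where $G$ is itself a threshold graph and a one-dimensional representation suffices. The threshold-normalization step you flag as the delicate point is handled correctly by the shift $f(x)_i = w_i(x)-t_i/2$.
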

\begin{proof} Let $G$ be a graph on $n$ vertices and set $t>0$. Assign an ordering to the vertices, $\{v_1, v_2, \ldots v_i, \ldots, v_n\}$. For $f:V\rightarrow \T^n$, build $f(v_i)$ coordinate wise as follows:
\[\text{coordinate } j \text{ of }f(v_i) = \begin{cases}
         \frac t3  & \text{if } j=i\\
         \infty & \text{if }j<i\\ 
         t & \text{if } j > i \text{ and }v_iv_j \in E(G)\\
         \frac t2 & \text{if }j > i \text{ and }v_iv_j\notin E(G) \\
         \end{cases} \] 
Now, for $v_iv_j \in E(G)$ we have $f(v_i) \od f(v_j) = \frac {4t}3 > t$, but if $v_iv_j \notin E(G)$ we have $f(v_i) \od  f(v_j) = \frac{5t}6 <t$ as desired. 
 \end{proof}

\begin{cor} \label{g-v} Let $G$ be a graph and $v\in V(G)$. Then $\rt(G)\le\rt(G-\{v\})+1$.\end{cor}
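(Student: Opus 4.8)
The plan is to extend an optimal min-plus representation of $G-\{v\}$ by a single coordinate. Set $k=\rt(G-\{v\})$, which is finite by the preceding theorem, and let $g:V(G)\setminus\{v\}\to\T^{k}$ be a min-plus representation with threshold $t>0$, so that for old vertices $u,w$ we have $uw\in E$ if and only if $g(u)\od g(w)\ge t$. I would build $f:V(G)\to\T^{k+1}$ by appending one new coordinate to each $g(u)$ and defining a fresh vector for $v$; showing that this $f$ represents $G$ gives $\rt(G)\le k+1$.

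The first thing to notice is how the extra coordinate interacts with the min-plus dot product. Because $\od$ is a coordinatewise tropical product followed by a minimum, appending a coordinate can only decrease a dot product. Hence every old non-edge remains a non-edge no matter what values are appended, and only old edges are at risk: for an old edge $uw$ we must keep the new contribution at least $t$. For $v$ itself, I would set its first $k$ coordinates to the additive identity $\infty$; since $\infty\otimes x=\infty$, the first $k$ terms of $f(v)\od f(u)$ all equal $\infty$ and the dot product collapses to the sum of the two new coordinates. This turns $v$'s adjacency into a one-dimensional threshold condition on the new coordinate, which can encode any neighborhood.

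The subtlety, and the step I expect to be the main obstacle, is that this single new coordinate must do two jobs at once: it must place $v$'s dot products on the correct side of $t$, and it must keep the new contribution on every old edge at least $t$. These requirements pull against each other precisely for an edge joining two non-neighbors of $v$, where the threshold condition wants the appended values small while edge preservation wants their sum large. I would resolve the tension by assigning $v$ a sufficiently negative value in its new coordinate and giving each old vertex a large positive value, larger for neighbors of $v$ than for non-neighbors (for instance $-2t$ for $v$, with $3t$ for neighbors and $2t$ for non-neighbors). With such a choice the pairwise sums among old vertices stay well above $t$, while $v$'s sum with each old vertex lands at exactly $t$ for neighbors and strictly below $t$ for non-neighbors. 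It then remains only to verify the three cases, namely old edges, old non-edges, and the edges incident to $v$, to confirm that $f$ is a min-plus representation of $G$ in dimension $k+1$.
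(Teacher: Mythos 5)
Your proposal is correct and follows essentially the same route as the paper: append one coordinate, give $v$ the value $\infty$ in the first $k$ coordinates so its adjacencies are decided entirely by the new coordinate, and choose the appended entries so that old edges keep their new-coordinate sum at or above $t$ while $v$'s sums land at $\ge t$ exactly for its neighbors. The paper uses the values $t/3$ for $v$, $t$ for neighbors, and $t/2$ for non-neighbors where you use $-2t$, $3t$, and $2t$, but both choices satisfy the same inequalities and the verification is identical.
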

\begin{proof} Let $k=\rt(G-\{v\})$ and choose $f:V\to \T^k$ be a min-plus $k$-tropical dot product representation of $G-{v}$ with threshold $t>0$. 
Now a min-plus $(k+1)$-tropical dot product representation $\hat{f}$ of $G$ can be formed by adding an extra coordinate to the representation $f$. Let 
 \[
  \hat{f}(u) = \begin{cases}
         \left(\!\!\!\begin{array}{c} f(u)\\ t
         		\end{array} \!\!\!\right)  & \text{when }u \neq v\text{ and }uv \in E(G)\\[.5cm] 
         \left(\!\!\! \begin{array}{c} f(u)\\ \frac{t}{2}
                 \end{array} \!\!\! \right)  & \text{when }u \neq v\text{ and }uv \notin E(G)\\[.5cm] 
         \left(\!\!\! \begin{array}{c} \infty_k \\ \frac{t}{3}
                 \end{array} \!\!\!\right)  & \text{when }u = v \\
         \end{cases}
 \] where $\infty_k$ is a vector of length $k$ with all coordinates equal to $\infty$.  \end{proof}
 
\begin{cor} $\rt(G) \leq n-1$\end{cor}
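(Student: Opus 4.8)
The plan is to prove the bound by induction on the number of vertices $n=|V(G)|$, using Corollary \ref{g-v} as the engine for the inductive step. This is the natural line of attack, since \ref{g-v} was just established and relates $\rt(G)$ to $\rt(G-\{v\})$, trading exactly one vertex for one dimension. For the base case I would take $n=1$: a single vertex has no edges, so it is represented vacuously (by the empty, dimension-$0$ vector), giving $\rt(K_1)=0=n-1$. For the inductive step, suppose $n\ge 2$ and that every graph on $n-1$ vertices has min-plus dimension at most $n-2$. Given $G$ on $n$ vertices, pick any $v\in V(G)$; then $G-\{v\}$ has $n-1$ vertices, so $\rt(G-\{v\})\le n-2$ by the inductive hypothesis, and Corollary \ref{g-v} yields $\rt(G)\le\rt(G-\{v\})+1\le n-1$. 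One can check that \ref{g-v} behaves correctly even when $G-\{v\}$ is a single vertex, so the base case at $n=1$ is enough and no separate $n=2$ analysis is needed.

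A second, more hands-on route is to read the bound straight off the construction used in the theorem asserting that every graph admits a min-plus representation. There the first coordinate of $f(v_1)$ equals $\tfrac{t}{3}$ while the first coordinate of every other $f(v_i)$ equals $\infty$. Consequently, in every pairwise tropical product $f(v_i)\od f(v_j)$ with $i\neq j$, at least one factor contributes $\infty$ in the first slot, so that slot always contributes the additive identity $\infty$ of $\T$ to the minimum and never attains it (each such product has a finite coordinate, namely the one indexed by the larger of $i,j$). Deleting the first coordinate therefore leaves every adjacency decision unchanged and produces a valid representation in $\T^{n-1}$, which is exactly the claim.

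The argument is short, so the only real subtlety is bookkeeping at the degenerate end. In the inductive version one must confirm that the base case (a single vertex, equivalently the dimension-$0$ empty representation) is legitimate; in the direct version one must confirm that removing a coordinate cannot flip a non-edge into an edge or vice versa. The latter rests entirely on $\infty$ being the min-plus additive identity, so discarding a coordinate that is uniformly $\infty$ across all of the relevant pairwise products is harmless. I expect no genuine obstacle beyond this edge-case verification, so I would present the induction as the main proof and note the coordinate-deletion observation as a confirming remark.
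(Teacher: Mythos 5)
Your main argument is essentially the paper's: the paper also obtains the bound by repeated application of Corollary \ref{g-v}, anchoring the recursion at a one-dimensional representation (it maps two adjacent vertices $u,v$ to $f(u)=f(v)=t$) and then adding the remaining $n-2$ vertices one dimension at a time. The one place you diverge is the base case: anchoring at $K_1$ with a ``dimension-$0$'' representation sits awkwardly in this framework, since representations here are maps into $\T^k$ with $k\ge 1$ and Theorem \ref{1} gives $\rt(K_1)=1$, not $0$; a cleaner anchor is $n=2$, where every graph is a threshold graph and hence $\rt\le 1=n-1$. (This also quietly covers edgeless graphs, which the paper's own base case, requiring an edge $uv\in E(G)$, does not literally handle.) Your second route is correct and genuinely different from the paper: in the $n$-dimensional construction the first coordinate of every pairwise product is $\infty$ and so never attains the minimum, while each product has a finite coordinate indexed by the larger of $i,j$; deleting that coordinate therefore yields a valid $(n-1)$-dimensional representation directly, with no induction. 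That observation is arguably the cleaner proof, and it is not the one the paper gives.
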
  
\begin{proof} Let $u,v\in V(G)$ such that $uv\in E(G)$.
 Choose $t\geq 0$ and let $f:\{u,v\}\rightarrow \R$ be the mapping $f(u)=t$ and $f(v)=t$.
  Recursively apply Corollary \ref{g-v} to build a $(n-1)$-dot product representation of $G$ in $\T$.\end{proof}
 
\begin{thm} Every graph has a max-plus tropical dot product representation. \end{thm}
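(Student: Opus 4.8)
The plan is to mirror the explicit construction used for the min-plus case, exchanging the roles of $\min$ and $\max$ and, with them, the ``large/small'' behaviour of the additive identity. A tempting shortcut is to transport the min-plus representation through the ring isomorphism $x\mapsto -x$ (which does send $\min$-plus to $\max$-plus); however, negation reverses the threshold inequality, turning ``$\od\ge t$'' into ``$\odh\le -t$'', so it does not yield a representation in the required sense. I would therefore build a fresh, but structurally parallel, representation directly in $\Th$.

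First I would order the vertices $\{v_1,\dots,v_n\}$ and define $f\colon V\to\Th^{\,n}$ coordinatewise, fixing a threshold $t>0$ together with three constants $d$ (diagonal), $e$ (edge), and $q$ (non-edge). Coordinate $j$ of $f(v_i)$ would be $d$ when $j=i$; the max-plus identity $-\infty$ when $j<i$ (the exact analogue of the $\infty$ used in the min-plus construction, since a $-\infty$ entry can never attain a maximum); $e$ when $j>i$ and $v_iv_j\in E$; and $q$ when $j>i$ and $v_iv_j\notin E$.

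Next I would determine, for $i<j$, which coordinates of $f(v_i)\odh f(v_j)$ can matter. For every $k<j$ at least one of the two vectors contributes a $-\infty$ entry: when $k<i$ it is $f(v_i)_k=-\infty$, and when $i\le k<j$ it is $f(v_j)_k=-\infty$; hence all those coordinate sums are $-\infty$ and are irrelevant to the maximum. Only the coordinate $k=j$, with value $f(v_i)_j+d$, and the ``future'' coordinates $k>j$, with values in $\{2e,\,e+q,\,2q\}$, survive. The edge/non-edge separation is then governed by the single coordinate $k=j$, which equals $e+d$ on an edge and $q+d$ on a non-edge.

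The crux is the dual of the min-plus argument, where the future coordinates were forced to be $\ge t$: here I must choose the constants so that every future coordinate sum is \emph{strictly} below $t$, so they can never spuriously realize an edge, while ensuring $e+d\ge t>q+d$. The binding constraint is $2e<t$ (two edge-entries appearing in a shared later coordinate must still fall short of $t$), which forces $e<t/2$; one may then take, for instance, $e=t/3$, $d=2t/3$, and $q=0$, giving $e+d=t$, $q+d=2t/3<t$, and all future sums in $\{2t/3,\,t/3,\,0\}$, each $<t$. With these values the maximum equals the $k=j$ term, which is $\ge t$ precisely when $v_iv_j\in E$, completing the representation in $\Th^{\,n}$. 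The main obstacle to watch is exactly this future-coordinate control: one must verify that no pair of later coordinates — in particular two edge-entries — can sum to $t$ or more, which is what the choice $e<t/2$ guarantees.
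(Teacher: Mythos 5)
Your construction is correct, and it is the same kind of argument as the paper's: an explicit $n$-dimensional coordinatewise representation with a distinguished diagonal entry and edge/non-edge entries. The details differ in an instructive way, though. The paper does not use the triangular $-\infty$ padding at all; it assigns coordinate $j$ of $f(v_i)$ symmetrically for every $j\neq i$ (namely $t$ on the diagonal, $t/3$ for neighbors, $0$ otherwise) and checks the two cases directly. Your version transplants the min-plus proof's upper-triangular structure, using $-\infty$ (the max-plus additive identity) below the diagonal so that only coordinate $j$ and the ``future'' coordinates survive, and then tunes the constants ($d=2t/3$, $e=t/3$, $q=0$) so that every future sum stays strictly below $t$. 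This extra care actually pays off: in the paper's construction a non-edge $v_iv_j$ produces the coordinate sum $f(v_i)_i+f(v_j)_i=t+0=t$, which meets the threshold $\geq t$ and would spuriously create an edge, so the claimed bound $\leq 2t/3$ for non-adjacent pairs is wrong as printed (it is repaired by lowering the diagonal entry to $2t/3$, which is in effect what you do). Your identification of $2e<t$ as the binding constraint, and your remark that the naive isomorphism $x\mapsto -x$ reverses the threshold inequality and therefore cannot be used as a shortcut, are both sound.
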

\begin{proof}  Let $G$ be a graph on $n$ vertices and let the threshold be $t\ge 0$. Suppose that the A $n$-tropical dot product representation may be created for an ordering to the vertices, $\{v_1, v_2, \ldots v_i, \ldots, v_n\}$. For $f:V\rightarrow Th^n$, build $f(v_i)$ coordinate wise as follows:
\[\text{coordinate } j \text{ of }f(v_i) = \begin{cases}
         t  & \text{if } j=i\\ 
         \frac t3 & \text{if } v_iv_j\in E(G) \\
         0 & otherwise
         \end{cases} \] 
Now, for $v_iv_j \in E(G)$ we have $f(v_i) \od f(v_j)  =\frac {4t}3 >t $, but if $v_iv_j \notin E(G)$ we have $f(v_i) \od  f(v_j) \le \frac {2t}3 <t$ as desired.         
 \end{proof}

\indent We have proven the existence of these tropical dot product representations using an arbitrary, $t$. When given the graph and are left to determine the vectors we find that the threshold does not influence the value of the tropical threshold dimension.
\begin{thm} The threshold, $t$, of any tropical dot product representation of a graph $G$ can be any arbitrary $t>0$.\end{thm}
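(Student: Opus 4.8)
The plan is to exploit the fact that the tropical dot product is positively homogeneous under ordinary (real) scalar multiplication of the coordinates. Concretely, if $\vec{u},\vec{v}\in\T^k$ and $\lambda>0$ is a real number, then scaling every entry classically gives
$$(\lambda\vec{u})\od(\lambda\vec{v}) = \min_j\{\lambda u_j+\lambda v_j\} = \lambda\,\min_j\{u_j+v_j\} = \lambda(\vec{u}\od\vec{v}),$$
where the middle equality holds because multiplication by a positive constant preserves the order of the reals and therefore commutes with $\min$. The identical computation with $\max$ in place of $\min$ shows the same identity holds in $\Th$ for $\odh$. This is the structural observation that drives everything.

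With this in hand, suppose $f:V\to\T^k$ is a min-plus tropical dot product representation of $G$ with threshold $t_0>0$, and let $t_1>0$ be any other target threshold. I would define a new map $g$ by scaling each image vector classically by $\lambda=t_1/t_0>0$, that is, $g(v)=\lambda f(v)$. Then for every pair $x,y\in V$ the homogeneity identity yields $g(x)\od g(y)=\lambda\,(f(x)\od f(y))$, and since $\lambda>0$ we have $f(x)\od f(y)\ge t_0$ if and only if $\lambda(f(x)\od f(y))\ge\lambda t_0=t_1$. Chaining these equivalences with the defining property of $f$ shows $xy\in E(G)$ exactly when $g(x)\od g(y)\ge t_1$, so $g$ is a representation of the same dimension $k$ with threshold $t_1$. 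The same argument applies verbatim in the max-plus case, giving the analogous conclusion for $\odh$.

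The only point requiring care is the treatment of the infinite coordinate values $\infty$ (in $\T$) and $-\infty$ (in $\Th$) that appear in the representations built in the preceding theorems, and this is the main, though mild, obstacle. Here I would simply note that for $\lambda>0$ we have $\lambda\cdot(\pm\infty)=\pm\infty$, so scaling fixes these entries; an infinite coordinate contributes an infinite term to the relevant $\min$ or $\max$ exactly as it did before scaling, so the homogeneity identity continues to hold even in their presence, and the equivalence above remains valid even when a dot product is itself infinite. Once this is dispatched the theorem follows: the particular positive value of $t$ is immaterial, and in particular $\rt(G)$ and $\rth(G)$ are well defined independently of the chosen threshold.
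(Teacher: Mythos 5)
Your proposal is correct and is essentially the paper's own argument: the paper defines $\hat{f}(u)=\frac{t}{t^*}f(u)$ and relies implicitly on the same positive-homogeneity of $\min$ and $\max$ that you spell out. Your extra care about the $\pm\infty$ coordinates is a reasonable addition but does not change the approach.
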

\begin{proof} Let $f:V\to \R^k$ be a tropical $k$-dot product representation of $G$ with threshold $t^*$. Then $\hat{f}:V\to \R^k$, $\hat{f}(u)=\frac{t}{t^*}f(u)$, is a tropical $k$-dot product representation of $G$ with threshold $t$ \end{proof}

\indent Since our choice of threshold does not affect the tropical dot product dimension of a graph, from here on we will put the threshold $t=1$ unless otherwise noted.

\section{Results with $\Theta (G)$}
\indent Note that for vectors $u, v$ of length one $v \od u = v \odh u$, since the minimum value of a set of size one is the same as the maximum value of that set. Hence if $G$ can be represented in $\T$ we have $\rt(G)=\rth(G)$. Thus the following theorem holds also for $\rth(G)$.
\begin{thm} \label{1} Let $G$ be a graph. Then $\rt(G)=1$ if and only if $G$ is a threshold graph.\end{thm}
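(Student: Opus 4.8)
The plan is to exploit the observation made just before the statement: for length-one vectors the tropical dot product reduces to ordinary real addition, since $\min$ and $\max$ of a singleton coincide and $v \od u = v \odh u$. Concretely, a $1$-dimensional min-plus representation is nothing more than a map $f : V \to \R$ together with a threshold $t > 0$ for which $f(x) \od f(y) = f(x) + f(y)$, so that $xy \in E$ if and only if $f(x) + f(y) \geq t$. The whole argument then amounts to recognizing that this is verbatim the threshold realization of condition 2 in the definition of a threshold graph. I would therefore organize the proof as a chain of equivalences between ``$\rt(G) = 1$'' and ``$G$ admits a threshold realization.''

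For the forward direction, I would assume $\rt(G) = 1$, fix a $1$-dimensional representation $f : V \to \R$ with threshold $t > 0$, and set $w := f$. Because $f(x) \od f(y) = f(x) + f(y)$, the defining biconditional of the representation becomes $w(x) + w(y) \geq t$ if and only if $xy \in E$, which is exactly condition 2; hence $G$ is a threshold graph. For the converse, I would start from a threshold realization $w : V \to \R$ with threshold value $t$ and take $f := w$. Since the single-coordinate tropical product is real addition, $f$ is a $1$-dimensional min-plus representation, giving $\rt(G) \leq 1$; as $\rt(G) \geq 1$ for any graph by convention, this forces $\rt(G) = 1$.

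The step I expect to require the most care is reconciling the sign conventions, since it is the one place where the two definitions are not literally identical: condition 2 permits an arbitrary threshold value, whereas a tropical representation requires $t > 0$. I would dispose of this either by invoking the earlier theorem that the threshold of a tropical representation may be taken to be any positive number, or directly by the shift $w(i) \mapsto w(i) + c$, under which the threshold $t$ becomes $t + 2c$ while every biconditional $w(i)+w(j)\geq t \iff ij\in E$ is preserved; choosing $c$ large enough makes the threshold positive. Everything else is definition-chasing made possible by the collapse of tropical arithmetic to addition in dimension one.

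Finally, I would observe that because $v \od u = v \odh u$ in dimension one, the identical argument applies in the max-plus setting, so the statement holds for $\rth(G)$ as well, consistent with the remark preceding the theorem.
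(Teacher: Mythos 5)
Your proposal is correct and follows essentially the same route as the paper, which proves the theorem by the same chain of equivalences: in dimension one the tropical dot product collapses to real addition, so a $1$-dimensional representation is literally a threshold realization. The only difference is that you explicitly handle the positivity of the threshold via a shift, a detail the paper's one-line proof leaves implicit.
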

\begin{proof} $\rt(G)=1 \Leftrightarrow (f:V\to \R$ such that $f(v)\odot f(u)\ge t \Leftrightarrow vu\in E) \Leftrightarrow (f(v)\od f(u) \geq t \Leftrightarrow f(v)+f(u)\geq t) \Leftrightarrow$ $G$ is a threshold graph.\end{proof}

\indent This result implies that any graph with tropical dot product dimension 1 can have at most one non-trivial component. This is different than the result for $\rho(G)=1$, where $G$ could have 2 components that are non-trivial threshold graphs. Thus a $2K_2$ is an example where $\rho(G)$ differs from  $\rt(G)$ and $\rth(G)$.\\ 
\indent Theorem \ref{1} also led us to consider how our tropical dot product dimensions of a graph $G$, $\rho_{{T}}(G)$ and $\rho_{\widehat{T}}(G)$, correlate with the theshold dimension of $G$, $\Theta(G)$. 

\begin{thm}\label{threshdim} $\rho_{\widehat{T}}(G) = \Theta (G)$ \end{thm}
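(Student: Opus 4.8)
The plan is to prove the two inequalities $\rth(G) \le \Theta(G)$ and $\Theta(G) \le \rth(G)$ separately, with both directions driven by a single observation: a single coordinate of a max-plus representation is exactly a threshold realization in the sense of characterization (2). Concretely, if $f : V \to \Th^k$ is a max-plus representation with threshold $t$, then for each fixed coordinate $i$ the assignment $w_i(x) = f(x)_i$ is a real weighting (after replacing any $-\infty$ entry by a sufficiently negative real number, which is harmless since $V$ is finite), and the rule ``$xy$ is an edge iff $w_i(x) + w_i(y) \ge t$'' defines a threshold graph $T_i$. Setting up this translation is the first thing I would do.

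For $\Theta(G) \le \rth(G)$, I would take an optimal max-plus representation $f$ of dimension $k = \rth(G)$ and form the threshold graphs $T_1, \dots, T_k$ as above. The key identity is that, by the definition of $\odh$,
\[
f(x) \odh f(y) = \max_i \bigl(f(x)_i + f(y)_i\bigr) \ge t \iff \exists\, i \text{ with } f(x)_i + f(y)_i \ge t,
\]
so $xy \in E(G)$ if and only if $xy$ is an edge of some $T_i$. Hence $G = T_1 \cup \cdots \cup T_k$, each $T_i$ is a threshold subgraph of $G$, and therefore $\Theta(G) \le k = \rth(G)$.

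For the reverse inequality, I would start from an optimal threshold decomposition $G = T_1 \cup \cdots \cup T_m$ with $m = \Theta(G)$, take a threshold realization $(w_i, t_i)$ of each $T_i$, and stack these weightings into the coordinates of a map $f : V \to \Th^m$. The only obstruction to concluding immediately is that the thresholds $t_i$ need not agree, so before stacking I would normalize every realization to a common threshold $t$ via the affine adjustment $w_i(x) \mapsto w_i(x) + (t - t_i)/2$, in the same spirit as the earlier theorem showing the threshold may be chosen arbitrarily. After normalization the same max-identity shows $f(x) \odh f(y) \ge t$ iff $xy$ lies in some $T_i$ iff $xy \in E(G)$, giving $\rth(G) \le m = \Theta(G)$.

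The step I expect to require the most care is the bookkeeping at the boundary of the two encodings: ensuring that passing from $-\infty$ entries to finite weights (and, conversely, representing isolated vertices of some $T_i$) neither creates nor destroys an edge, and that the common-threshold adjustment is applied uniformly so a single global $t$ governs all coordinates. The conceptual heart, however, is simply that ``max-plus dot product $\ge t$'' is precisely the disjunction over coordinates, which matches the edge-union defining $\Theta(G)$; once that is stated, both inequalities fall out. I would close by remarking that this result contains Theorem \ref{1} as the case $k = 1$.
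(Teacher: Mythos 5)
Your proposal is correct and takes essentially the same route as the paper: both directions rest on the observation that $f(x)\odh f(y)\ge t$ holds iff some coordinate sum reaches $t$, so the coordinates of a max-plus representation correspond exactly to the threshold subgraphs in a union decomposition. Your added care about normalizing the thresholds $t_i$ to a common $t$ and handling $-\infty$ entries fills in details the paper's proof glosses over, but it does not change the argument.
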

\begin{proof} $\rho_{\widehat{T}}(G) \leq \Theta(G)$: Let $t=1$ and $G$ be a graph such that $\Theta(G)=m$. Let $\{G_1, G_2, \ldots, G_m\}$ be a set of threshold graphs that realize $\Theta(G)$. Let $g_i:V(G_i) \rightarrow \R$ be the threshold realization of $G_i$. Create $f(v_i)$ as follows: 
\[
 \text{coordinate }j\text{ of }f(v_i) = \begin{cases}
         g_j(v_i) & \text{when } v_i \in V(G_i)\\
        0  & otherwise
        \end{cases}
\] 
Thus we have an max-plus $m$-tropical dot product representation of $G$ and $\rho_{\widehat{T}}(G)\leq \Theta(G)$.\\
\indent $\rho_{\widehat{T}}(G) \geq \Theta(G)$: Let $(G)$ be a graph such that $\rho_{\widehat{T}}(G)=k$. This means there is a max-plus $k$-tropical dot product representation of $G$, call it $f$. By Theorem \ref{1} we see that if we consider the graph created by the dot product of vectors made of only the value of the $j^{th}$ coordinate of $f(v_i)$ for all $v_i\in G$ we have a threshold graph that is a subgraph of $G$. By this consideration we find $k$ threshold graphs such that the union of these graphs gives us $G$. By the definition of threshold dimension we know that $\Theta(G)\leq k$ and hence $\Theta(G) \leq \rho_{\widehat{T}}(G)$\\
Thus $\rho_{\widehat{T}}(G)=\Theta(G)$. \end{proof}

\indent This result gives us a wealth of free information about $\rth(G)$ and also gives us a new way to explore the well researched parameter $\Theta$. Problems and results dealing with $\Theta(G)$ can now be rephrased in terms of $\rth (G)$. For example: \\
\begin{cor} For every graph $G$ on n vertices we have $\rth (G) \leq n- \alpha (G)$. Furthermore, if $G$ is triangle-free, then $\rth (G) = n- \alpha (G)$. \end{cor}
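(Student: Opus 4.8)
The plan is to derive this entirely from results already in hand, combining the identity $\rth(G) = \Theta(G)$ from Theorem \ref{threshdim} with the classical bound on the threshold dimension stated earlier in the excerpt (the Chv\'atal theorem), namely $\Theta(G) \le n - \alpha(G)$ for every graph $G$, with equality when $G$ is triangle-free. Since no new combinatorial content is required, the proof amounts to a translation between the two parameters.

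First I would invoke Theorem \ref{threshdim} to replace $\rth(G)$ by $\Theta(G)$. The inequality $\rth(G) \le n - \alpha(G)$ then follows at once from the upper bound $\Theta(G) \le n - \alpha(G)$. For the second assertion, I would observe that when $G$ is triangle-free the Chv\'atal theorem upgrades the inequality to the equality $\Theta(G) = n - \alpha(G)$; substituting $\rth(G) = \Theta(G)$ once more yields $\rth(G) = n - \alpha(G)$, as claimed.

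I do not anticipate any genuine obstacle here: both ingredients are already established, so the only thing to check is that they refer to the same quantity on the same graph, which Theorem \ref{threshdim} guarantees. This corollary is in fact a prototypical instance of the ``free information'' about $\rth$ advertised in the remark following Theorem \ref{threshdim}, where any known result about $\Theta$ transfers verbatim to $\rth$.
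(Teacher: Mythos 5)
Your proposal is correct and matches the paper's intent exactly: the corollary is presented there without proof as a direct consequence of Theorem \ref{threshdim} combined with the Chv\'atal--Hammer bound $\Theta(G) \le n - \alpha(G)$ (with equality for triangle-free graphs). Nothing further is needed.
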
 
\indent Define the \textit{threshold graph intersection number} as the minimum value for $k$ such that there exists a set of $k$ threshold graphs, ${G_1, G_2, \ldots, G_k}$, with  $G_i=(V,E(G_i))$ and $G=\cap_{i=1}^{k}G_i$. 
We denote the threshold graph intersection number as $\wt(G)$. The intersection of two graphs cannot have more edges than either of the original graphs. 
This hints at a connection between our new parameter and $\rt(G)$.

\begin{thm} For $G=(V,E)$, $\rt (G)=\wt(G)$. \end{thm}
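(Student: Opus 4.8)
The plan is to mirror the proof of Theorem~\ref{threshdim} almost verbatim, swapping the pair $(\max,\text{union})$ for $(\min,\text{intersection})$. The structural reason this should work is that the min-plus threshold condition $f(u)\od f(v)=\min_j\big(f(u)_j+f(v)_j\big)\ge t$ holds precisely when \emph{every} coordinate clears the threshold, i.e. it encodes a logical \textbf{and} across coordinates, exactly as $\max\ge t$ encoded a logical \textbf{or} (hence union) in Theorem~\ref{threshdim}. An \textbf{and} across coordinates corresponds to an intersection of the associated per-coordinate graphs, which is exactly what $\wt$ measures. As before I would prove the two inequalities separately.

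For $\rt(G)\le\wt(G)$: suppose $\wt(G)=m$ and fix threshold graphs $G_1,\dots,G_m$ on the common vertex set $V$ with $G=\cap_{i=1}^m G_i$. Each $G_i$ has a threshold realization $g_i:V\to\R$ (definition~2 of threshold graph), and by the normalization result that the threshold may be taken to be any fixed $t>0$, I may assume all $g_i$ use the common threshold $t=1$. Define $f:V\to\T^m$ by letting coordinate $j$ of $f(v)$ be $g_j(v)$. Then $f(u)\od f(v)\ge 1$ iff $g_j(u)+g_j(v)\ge 1$ for every $j$, iff $uv\in E(G_j)$ for every $j$, iff $uv\in\cap_j E(G_j)=E(G)$. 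Thus $f$ is a min-plus $m$-representation and $\rt(G)\le m$.

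For $\rt(G)\ge\wt(G)$: take a min-plus $k$-representation $f$ of $G$ with $k=\rt(G)$ and threshold $t$. For each coordinate $j$ let $H_j=(V,E(H_j))$ where $uv\in E(H_j)$ iff $f(u)_j+f(v)_j\ge t$; by Theorem~\ref{1} each $H_j$ is a threshold graph (the map $v\mapsto f(v)_j$ is its realization). Since $f(u)\od f(v)\ge t$ exactly when $f(u)_j+f(v)_j\ge t$ for all $j$, we get $G=\cap_{j=1}^k H_j$, so $\wt(G)\le k$, completing the proof. The one point needing care --- and the main obstacle --- is that min-plus vectors live in $(\R\cup\{\infty\})^k$, so a coordinate may equal $\infty$, whereas the real-weight characterization invoked through Theorem~\ref{1} strictly requires finite weights. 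I would dispose of this by replacing each $\infty$ in coordinate $j$ with a single real $M_j$ chosen larger than $t-\min\{f(v)_j : f(v)_j\ne\infty\}$, which leaves every adjacency of $H_j$ unchanged (a vertex with an $\infty$ entry is simply dominating in that slice, consistent with threshold structure). As a sanity check, since the complement of a threshold graph is threshold and $G=\cap_i G_i$ iff $\bar G=\cup_i\bar{G_i}$, one expects $\wt(G)=\Theta(\bar G)$, which is consistent with the min/max duality between this theorem and Theorem~\ref{threshdim}.
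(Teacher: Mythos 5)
Your proposal is correct and follows essentially the same route as the paper: both inequalities are proved by passing between a min-plus representation and a family of per-coordinate threshold graphs whose intersection is $G$, exactly as in the paper's proof. Your extra remark about replacing $\infty$ entries by a sufficiently large real number addresses a detail the paper silently ignores (it writes $f:V\to\R^k$ throughout), and is a sensible refinement rather than a departure.
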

\begin{proof} \textbf{$\rt(G)\geq \wt(G)$}: Suppose $\rt(G)=k$, and $f:V\rightarrow R^k$ is a $k$-tropical dot product representation of $G$. Let $v_i$ represent the value of the $i^{th}$ coordinate of $f(v)$.
 Create a set of $G_i=(V,E(G_i))$ which are the threshold graphs formed by $f_i(v)=v_i$ for $v\in V$ and $t=1$. If $xy\in E(G)$ then $x \od y \geq 1$ so $x_i+y_i \geq 1$ for all $1 \leq i \leq k$. 
 Hence $xy \in E(G_i)$ for $1 \leq i \leq k$ and $xy\in \cap_{i=1}^k E(G_i)$. If $xy\notin E(G)$ then $x \od y < 1$ and there exists some value of $i$ for which $x_i + y_i < 1$. 
 For this $i$, $xy\notin E(G_i)$ and hence $xy \notin \cap_{i=1}^k G_i$. Thus $G=\cap_{i=1}^k G_i$ and $\rt(G)\geq \wt(G)$.\\
\indent \textbf{$\rt(G) \leq \wt(G)$}: Let $X=\{G_1, G_2, \ldots, G_k\}$ be a set of threshold graphs on $V$ such that $G=\cap_{i=1}^k G_i$. Since each $G_i$ is a threshold graph, let the function $f_i:V\rightarrow R$ be a mapping that realizes $G_i$ as a threshold graph. Create $f:V \rightarrow R^k$ where for $v\in V$  the $i^{th}$ coordinate of $f(v)$ is $f_i(v)$. Since $G=\cap_{i=1}^k G_i$, for $xy\in E(G)$ we know that $xy\in E(G_i)$ for $1 \leq i \leq k$ so $f_i(x)+f_i(y) \geq 1$ and by definition $f(x)\od f(y) \geq 1$ as desired. If $xy\notin E(G)$ there is a $G_i$ with $xy\notin E(G_i)$. Then $f_i(x)+f_i(y) <1$ and $f(x)\od f(y) <1$. Thus $f$ is a tropical $k$-dot product of $G$ and $\rt \leq \wt(G)$.\\
\noindent Therefore $\rt(G)= \wt(G)$.\end{proof}

\indent Utilizing de Morgan's Law from set theory, we are now able to make a connection between $\rt(G)$ and $\rth(G)$ using de Morgan's law and their connections to $\Theta(G)$ and $\wt(G)$.
 
\begin{thm}\label{bigc} $\wt(G)=\rt(G)=\rth(\overline{G})=\Theta(\overline{G})$\end{thm}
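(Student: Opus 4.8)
The plan is to observe that this four-way equality collapses almost entirely onto results already established, leaving only a single de Morgan bridge to verify. Two of the three links come for free: the equality $\rt(G)=\wt(G)$ is exactly the theorem proved immediately above, and the equality $\rth(\overline{G})=\Theta(\overline{G})$ is Theorem \ref{threshdim} applied to $\overline{G}$ in place of $G$. Consequently everything reduces to proving the middle identity $\wt(G)=\Theta(\overline{G})$, after which the inherited equalities can be stitched on at either end.

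To prove $\wt(G)=\Theta(\overline{G})$, the first thing I would do is fix a common vertex set $V$ for all the graphs in play, so that complementation, intersection, and union act purely at the level of edge sets on $V$; this is what makes de Morgan's law applicable verbatim. Recall from the preliminaries (via forbidden-subgraph definition 4) that the class of threshold graphs is closed under complementation, since $\{C_4,P_4,2K_2\}$ is a self-complementary set. This closure is the engine of the argument.

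For the inequality $\Theta(\overline{G})\le\wt(G)$, I would take an optimal family realizing $\wt(G)=k$, that is, threshold graphs $G_1,\dots,G_k$ on $V$ with $G=\cap_{i=1}^k G_i$. Complementing both sides and applying de Morgan gives $\overline{G}=\cup_{i=1}^k \overline{G_i}$, and each $\overline{G_i}$ is again threshold by the closure property; hence $\overline{G}$ is a union of $k$ threshold graphs and $\Theta(\overline{G})\le k$. The reverse inequality $\wt(G)\le\Theta(\overline{G})$ is entirely symmetric: start from an optimal threshold cover $\overline{G}=\cup_{i=1}^k H_i$, complement, and use de Morgan to write $G=\cap_{i=1}^k \overline{H_i}$ with each $\overline{H_i}$ threshold, so $\wt(G)\le k$.

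I do not expect a genuine obstacle here; the argument is a direct translation of de Morgan's law once the closure of threshold graphs under complementation is invoked. The only point demanding care is bookkeeping: one must insist that every graph involved is taken on the full vertex set $V$ (padding with isolated vertices where needed) so that ``complement'' is unambiguous and the intersection and union identities hold edge-for-edge. With that convention fixed, the two inequalities give $\wt(G)=\Theta(\overline{G})$, and combining with the two inherited equalities yields the full statement $\wt(G)=\rt(G)=\rth(\overline{G})=\Theta(\overline{G})$.
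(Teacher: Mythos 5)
Your proposal is correct and follows essentially the same route as the paper: both reduce the four-way equality to the previously proved identities $\rt(G)=\wt(G)$ and $\rth(\overline{G})=\Theta(\overline{G})$, and then bridge the middle with de Morgan's law applied to an optimal threshold intersection/cover, using closure of threshold graphs under complementation. Your write-up is in fact a bit more careful than the paper's (you make the closure property and the common-vertex-set convention explicit, where the paper leaves them implicit), but the underlying argument is identical.
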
 
\begin{proof} If $G$ is a threshold graph then $\overline{G}$ is a threshold graph.\cite{Reit89} Thus $\rt(G)=\rth(G)=1$.\\
If $G$ is not a threshold graph then suppose $\rt(G)=k$. Then there exist graphs $G_1, G_2, \ldots, G_k$, $G_i=(V,E(G_i)$, such that $G=\cap_{i=1}^kG_i$. By de Morgan's Law $\overline{G}=\overline{\cap_{i=1}^kG_i}=\cup_{i=1}^k\overline{G_i}$ and so $\rt(G) \geq \rth(\overline{G})$.\\
\indent For $\rt(G)\leq \rth(\overline{G})$ suppose $\rth(\overline{G})=k$ then $\overline{G}=\cup_{i=1}^kG_i$. By de Morgan's Law $G = \overline{\cup_{i=1}^kG_i}= \cap_{i=1}^k(\overline{G})$ so $\rt(G)\leq \rth(\overline{G})$.\end{proof}

\indent The following corollary is a direct result of this theorem.
\begin{cor}\label{comp} If $G$ is self complementary then $\rt(G)=\rth(G)$.\end{cor}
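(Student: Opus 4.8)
The plan is to read off the result directly from Theorem \ref{bigc}. That theorem establishes, for every graph $G$, the chain of equalities $\wt(G)=\rt(G)=\rth(\overline{G})=\Theta(\overline{G})$. The only link I need is the middle equality $\rt(G)=\rth(\overline{G})$, which relates the min-plus dimension of $G$ to the max-plus dimension of its complement. So the entire argument reduces to converting the complement $\overline{G}$ appearing there back into $G$ itself, and self-complementarity is precisely the hypothesis that makes this possible.

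First I would invoke the definition of self-complementary: $G$ is self-complementary means $G \cong \overline{G}$, i.e.\ there is a graph isomorphism $\phi : V(G) \to V(\overline{G})$ preserving adjacency. The second ingredient I need is that $\rth$ is an isomorphism invariant. This is immediate from the definition of a tropical dot product representation: if $f:V(\overline{G})\to \R^k$ is a max-plus $k$-representation of $\overline{G}$ with threshold $t$, then $f\circ\phi : V(G)\to \R^k$ is a max-plus $k$-representation of $G$ with the same threshold, since $xy\in E(G)$ iff $\phi(x)\phi(y)\in E(\overline{G})$ iff $f(\phi(x))\odh f(\phi(y))\ge t$. Hence $\rth(G)=\rth(\overline{G})$.

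Combining the two observations gives the result in one line: $\rt(G)=\rth(\overline{G})=\rth(G)$, where the first equality is Theorem \ref{bigc} and the second is the isomorphism invariance applied to $G\cong\overline{G}$. There is essentially no obstacle here; the corollary is a formal consequence of the complement-swapping identity already proved. The only point requiring any care is the explicit verification that relabeling vertices via $\phi$ carries a representation to a representation without changing its dimension — a routine check that I would state briefly rather than belabor, since the representation condition depends only on the adjacency relation and not on the names of the vertices.
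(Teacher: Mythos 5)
Your argument is correct and is exactly the route the paper intends: the paper states this corollary as a ``direct result'' of Theorem \ref{bigc} without further proof, and your derivation $\rt(G)=\rth(\overline{G})=\rth(G)$ spells out the only needed step (isomorphism invariance of $\rth$, since self-complementary means $G\cong\overline{G}$ rather than $G=\overline{G}$). No gaps.
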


\section{Other Results}
\indent Beginning with the results above we can establish values and bounds on the tropical threshold dimensions for non-threshold graphs. For many of the cases below the bound or value of the max-plus tropical dot product dimension has already been determined by those studying threshold dimension and can be found in resources such as \cite{threshold}. As the purpose of this work is not to reiterate these findings, we predominantly give results about $\rt(G)$.

\begin{cor} $\rt(C_4)=2$\end{cor}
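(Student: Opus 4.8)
The plan is to leverage Theorem~\ref{bigc}, which gives $\rt(G) = \Theta(\overline{G})$ for every graph $G$, thereby reducing the computation to the threshold dimension of the complement. First I would identify $\overline{C_4}$ explicitly: labeling the cycle's vertices $1,2,3,4$ with edges $12,23,34,41$, the non-edges are exactly $13$ and $24$, and since these two edges share no vertex, $\overline{C_4} = 2K_2$. Hence $\rt(C_4) = \Theta(2K_2)$, and the entire problem collapses to finding the threshold dimension of two disjoint edges.

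For the lower bound, I would note that $2K_2$ is one of the three forbidden induced subgraphs characterizing threshold graphs (definition~4 in the preliminaries), so $2K_2$ is not itself a threshold graph. Since $\Theta(H)=1$ precisely when $H$ is a threshold graph, this forces $\Theta(2K_2) \geq 2$. Equivalently, one can run the lower bound directly on $C_4$: because $C_4$ is a forbidden induced subgraph it is not threshold, so Theorem~\ref{1} gives $\rt(C_4) \neq 1$, i.e. $\rt(C_4) \geq 2$.

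For the upper bound I see two clean routes. The elementary one is to observe that a single edge together with isolated vertices is a threshold graph, so $2K_2$ is the union of two such threshold graphs and therefore $\Theta(2K_2) \leq 2$. Alternatively, since $2K_2$ is triangle-free on $n=4$ vertices with $\alpha(2K_2)=2$ (one endpoint from each of the two edges gives a maximum independent set), Chvátal's theorem yields $\Theta(2K_2) = n - \alpha(2K_2) = 4-2 = 2$ directly. I would likely cite the union argument as the primary proof, since it avoids invoking the stronger triangle-free equality case.

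Combining the two bounds gives $\rt(C_4) = \Theta(2K_2) = 2$. There is no substantial obstacle in this corollary; the only points requiring care are computing the complement correctly and confirming that $2K_2$ (equivalently $C_4$ itself) is genuinely non-threshold, so that the lower bound does not degenerate to $1$.
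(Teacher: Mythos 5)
Your proposal is correct and follows the same route as the paper: pass to the complement $\overline{C_4}=2K_2$, use Theorem~\ref{bigc} to identify $\rt(C_4)$ with $\Theta(2K_2)$, and conclude from $\Theta(2K_2)=2$. The only difference is that you spell out the lower and upper bounds for $\Theta(2K_2)$, which the paper simply asserts as known.
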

\begin{proof} The complement of a $C_4$ is a $2K_2$. This graph has threshold dimension 2, so $\rth(\overline{C_4})=2$ by Theorem \ref{threshdim}. Then by Theorem \ref{bigc} $\rt(C_4)=2$. \end{proof}

\begin{cor}\label{cor:2k2} $\rt(2K_2)=2$\end{cor}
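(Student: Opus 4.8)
The plan is to exploit the complement relationship packaged in Theorem \ref{bigc}, mirroring exactly the argument just given for $\rt(C_4)$. First I would identify the complement: writing $2K_2$ on vertices $a,b,c,d$ with edges $ab$ and $cd$, its non-edges are precisely $ac,ad,bc,bd$, which form a $4$-cycle, so $\overline{2K_2}=C_4$.

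Next I would determine $\Theta(C_4)$. Since $C_4$ is triangle-free on $n=4$ vertices with $\alpha(C_4)=2$, the opening theorem of the paper (the Chv\'atal bound, which is an equality in the triangle-free case) gives $\Theta(C_4)=n-\alpha(C_4)=4-2=2$. Equivalently, one checks directly that $C_4$ is not a threshold graph, since it is itself an induced $C_4$, a forbidden subgraph, yet it is the union of the two threshold subgraphs isomorphic to $K_{1,2}$ obtained from its two pairs of incident edges; this gives $\Theta(C_4)\ge 2$ and $\Theta(C_4)\le 2$ respectively.

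With these two facts in hand, the conclusion is immediate. By Theorem \ref{threshdim} we have $\rth(C_4)=\Theta(C_4)=2$, and then Theorem \ref{bigc}, which asserts $\rt(G)=\rth(\overline{G})=\Theta(\overline{G})$, yields $\rt(2K_2)=\rth(\overline{2K_2})=\rth(C_4)=2$. Alternatively, reading $\rt(2K_2)=\Theta(\overline{2K_2})$ directly off Theorem \ref{bigc} collapses the entire computation to the single evaluation $\Theta(C_4)=2$.

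There is no genuine obstacle here, as the statement is a direct corollary of the machinery already established; the only point requiring care is the evaluation $\Theta(C_4)=2$, and in particular the lower bound $\Theta(C_4)\ge 2$, which guarantees the dimension is truly $2$ rather than $1$. This is exactly where the failure of $2K_2$ (equivalently $C_4$) to be a threshold graph, together with Theorem \ref{1}, is being invoked.
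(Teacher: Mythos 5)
Your proposal is correct and follows essentially the same route as the paper: pass to the complement $\overline{2K_2}=C_4$, note $\Theta(C_4)=2$, and apply Theorems \ref{threshdim} and \ref{bigc}. The extra justification you give for $\Theta(C_4)=2$ (via the Chv\'atal bound or the forbidden-subgraph argument) is a welcome addition the paper leaves implicit, but it does not change the argument.
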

\begin{proof} The complement of a $2K_2$ is a $C_4$ which has threshold dimension 2. Thus by Theorem \ref{threshdim} $\rth(\overline{2K_2})=2$ so $\rt(2K_2)=2$ by Theorem \ref{bigc}.\end{proof}

\begin{cor} $\rt(P_4)=2$\end{cor}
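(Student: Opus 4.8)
The plan is to mirror the strategy of the two preceding corollaries, namely to pass to the complement and invoke Theorem \ref{bigc}, which tells us $\rt(P_4)=\Theta(\overline{P_4})$. First I would identify the complement of $P_4$. Writing $P_4$ as the path $a$--$b$--$c$--$d$ with edge set $\{ab,bc,cd\}$, the non-edges are $ac$, $ad$, and $bd$; reading these off shows that $\overline{P_4}$ is the path $c$--$a$--$d$--$b$. Hence $P_4$ is self-complementary, $\overline{P_4}\cong P_4$ (this is precisely the hypothesis of Corollary \ref{comp}). Consequently Theorem \ref{bigc} reduces the problem to computing $\Theta(P_4)$, since $\rt(P_4)=\Theta(\overline{P_4})=\Theta(P_4)$.

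Next I would establish $\Theta(P_4)=2$. The lower bound $\Theta(P_4)\ge 2$ is immediate: $P_4$ is itself one of the forbidden induced subgraphs in characterization 4 of threshold graphs, so $P_4$ is not a threshold graph and cannot be realized as a single threshold subgraph. For the upper bound $\Theta(P_4)\le 2$ I would exhibit an explicit decomposition into two threshold graphs whose union is $P_4$: take $T_1$ to be the star $K_{1,2}$ centered at $b$ (the edges $ab$ and $bc$) and $T_2$ to be the single edge $cd$ together with the remaining vertices as isolated vertices. Both $T_1$ and $T_2$ are threshold graphs by characterization 3, and $T_1\cup T_2=P_4$, so $\Theta(P_4)\le 2$. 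Combining the two bounds gives $\Theta(P_4)=2$, whence $\rt(P_4)=2$.

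As an alternative route for the computation of $\Theta(P_4)$, I note that $P_4$ is triangle-free, so the triangle-free case of the Chv\'atal bound from \cite{Chvatal} applies directly and yields $\Theta(P_4)=n-\alpha(P_4)=4-2=2$, using the fact that a largest independent set in $P_4$ (for instance $\{a,d\}$) has size $\alpha(P_4)=2$. Either computation closes the argument.

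There is essentially no serious obstacle here; the only points requiring care are recognizing that $P_4$ is self-complementary (so that Theorem \ref{bigc} collapses to $\Theta(P_4)$ rather than the threshold dimension of a distinct complement, as happened for $C_4$ and $2K_2$) and correctly determining either the independence number or an explicit two-part threshold decomposition.
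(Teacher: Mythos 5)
Your proof is correct and takes essentially the same route as the paper: both exploit that $P_4$ is self-complementary and reduce the problem to $\Theta(P_4)=2$ via the chain $\rt(P_4)=\rth(\overline{P_4})=\Theta(\overline{P_4})=\Theta(P_4)$ (the paper cites Corollary \ref{comp} and Theorem \ref{threshdim}, which amount to the same argument as your appeal to Theorem \ref{bigc}). The only difference is that you explicitly verify $\Theta(P_4)=2$ with a two-graph decomposition and the Chv\'atal bound, whereas the paper simply asserts that the threshold dimension of $P_4$ is $2$.
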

\begin{proof} A $P_4$ is self complementary and has threshold dimension 2. Thus by Theorem \ref{threshdim} and Corollary \ref{comp} $\rth(P_4)=\rt(P_4)=2$. \end{proof}

Though the threshold dimension of a graph, or its complement, generally may not be known, we can still say some things about the min-plus tropical dot product dimension of it. Many of these results echo the type of results that have been obtained regarding $\rho(G)$, some of which are listed in Section 2.

\begin{cor} For any graph $G$ on $n \geq 3$ vertices, let $H$ be the largest induced subgraph of $G$ that is a threshold graph. If $H$ has $k$ vertices then $\rt(G)\le n-k+1$.\end{cor}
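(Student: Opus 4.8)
The plan is to combine the fact that threshold graphs are precisely the graphs of min-plus dimension one (Theorem \ref{1}) with the vertex-deletion estimate of Corollary \ref{g-v}. Since $H$ is an induced threshold subgraph of $G$ on $k$ vertices, exactly $n-k$ vertices of $G$ lie outside $H$; the idea is to peel these off one at a time and control how much the dimension can change at each step.

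First I would label the vertices outside $H$ as $V(G)\setminus V(H)=\{u_1,\ldots,u_{n-k}\}$ and build the chain of induced subgraphs
\[
H=G_0\subset G_1\subset\cdots\subset G_{n-k}=G,
\]
where $G_i=G-\{u_{i+1},\ldots,u_{n-k}\}$. By construction $G_0$ is exactly the induced threshold subgraph $H$, and each $G_i$ is obtained from $G_{i+1}$ by deleting the single vertex $u_{i+1}$, so that $G_{i+1}-\{u_{i+1}\}=G_i$. Because $H$ is a threshold graph, Theorem \ref{1} gives $\rt(G_0)=\rt(H)=1$. Applying Corollary \ref{g-v} to $G_{i+1}$ and the vertex $u_{i+1}$ yields $\rt(G_{i+1})\le \rt(G_i)+1$ for each $0\le i\le n-k-1$, and iterating this $n-k$ times gives
\[
\rt(G)=\rt(G_{n-k})\le \rt(G_0)+(n-k)=1+(n-k)=n-k+1,
\]
which is the claimed bound.

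I do not expect a serious obstacle, as the argument is a telescoping application of Corollary \ref{g-v} seeded by Theorem \ref{1}. The only care needed is the bookkeeping that verifies each $G_i$ really is $G_{i+1}$ with one vertex removed, so that Corollary \ref{g-v} applies verbatim; note also that maximality of $H$ is not used in the inequality itself and only serves to make $n-k+1$ the strongest bound this method produces. The hypothesis $n\ge 3$ is benign, since for $n\le 2$ every graph is already a threshold graph with $k=n$, in which case the bound reduces to $\rt(G)\le 1$.
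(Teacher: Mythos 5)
Your proposal is correct and follows essentially the same route as the paper: seed with Theorem \ref{1} giving $\rt(H)=1$, then iterate Corollary \ref{g-v} over the $n-k$ vertices outside $H$. Your bookkeeping is in fact slightly cleaner than the paper's, which says to apply the deletion corollary ``$n-(k-1)$ times'' where $n-k$ applications (as you count) is what actually yields the stated bound $n-k+1$.
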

\begin{proof} Since any graph on $n=3$ vertices is a threshold graph, every graph has an induced subgraph that is a threshold graph on at least 3 vertices. Find the largest such graph, call it $H$. By Theorem \ref{1} there exists a function $f$ that is a 1-tropical dot product representation of this subgraph. Now apply Lemma \ref{g-v} $n-(k-1)$ times to create $\hat{f}$ an $n-k+1$ tropical dot product representation of $G$. \end{proof}

A result for $\rho(G)$ is that by adding edges to $G$ to form a clique on some subset of vertices of $G$ the dot product dimension is increased by at most 1. The \textit{join} of two graphs, $G_1=(V_1,E_1)$ and $G_2=(V_2,E_2)$, is the graph $G_1 \vee G_2 = G_1 \cup G_2 \cup K_{V_1, V_2}$, where $K_{V_1, V_2}$ is the bipartite graph with $V_1$ as one partite set and $V_2$ as the other. In other words, $G_1 \vee G_2$ adds an edge from every vertex of $G_1$ to every vertex of $G_2$. When $G_2$ is a complete graph this operation does not increase the tropical dot product dimension of a graph: 

\begin{lem} \label{vee} Let $G$ be a graph and $K_n$ a complete graph on $n$-vertices. Then $\rt(G\vee K_n)=\rt(G)$.\end{lem}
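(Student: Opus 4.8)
The plan is to establish the two inequalities $\rt(G\vee K_n)\le \rt(G)$ and $\rt(G)\le \rt(G\vee K_n)$ separately, working directly with min-plus representations (though one could equally route the argument through the identity $\rt=\wt$ proved above). The lower bound $\rt(G)\le \rt(G\vee K_n)$ will come essentially for free, so the real content is the upper bound.

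First I would dispose of the direction $\rt(G)\le \rt(G\vee K_n)$ by observing that $G$ is exactly the induced subgraph of $G\vee K_n$ on the vertex set $V(G)$, since the join adds no edges among the vertices of $G$. If $f\colon V(G\vee K_n)\to\R^m$ is a min-plus $m$-tropical dot product representation with $m=\rt(G\vee K_n)$ and threshold $t$, then the restriction $f|_{V(G)}$ is a representation of $G$: for $x,y\in V(G)$ the value $f(x)\od f(y)$ depends only on $f(x)$ and $f(y)$, and $xy\in E(G)\Leftrightarrow xy\in E(G\vee K_n)\Leftrightarrow f(x)\od f(y)\ge t$. Hence $\rt(G)\le m$.

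For the upper bound I would start from a min-plus $k$-representation $f\colon V(G)\to\R^k$ of $G$ with $k=\rt(G)$, taking threshold $t=1$ (permissible since the threshold may be chosen arbitrarily). Labelling the vertices of $K_n$ as $u_1,\dots,u_n$, I extend $f$ to $\hat f$ on $V(G)\cup\{u_1,\dots,u_n\}$ by keeping $\hat f(v)=f(v)$ for $v\in V(G)$ and setting every coordinate of $\hat f(u_j)$ equal to a single large constant $L$, chosen so that $L+f_i(v)\ge 1$ for all coordinates $i$ and all $v\in V(G)$ (for instance $L>1-\min_{i,v}f_i(v)$ and $L\ge 1/2$). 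One then checks the three types of pairs: among $V(G)$ the vectors are untouched, so adjacencies still match $G$; for $u_j$ and $v\in V(G)$ one has $\hat f(u_j)\od \hat f(v)=\min_i(L+f_i(v))\ge 1$, giving the required join edges; and for $u_j,u_{j'}$ one has $\min_i(2L)=2L\ge 1$, giving the edges of $K_n$. Thus $\hat f$ represents $G\vee K_n$ in dimension $k$, so $\rt(G\vee K_n)\le k=\rt(G)$, and combining with the first part yields equality.

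The one point that deserves care—and what I would flag as the main (modest) obstacle—is that because $\od$ is a coordinate-wise \emph{minimum}, making a single new vertex universal forces \emph{every} coordinate of its vector to be large, not just one; this is exactly why I set all coordinates of $\hat f(u_j)$ to the same large $L$. Everything else is routine verification that the extension introduces no spurious edges inside $V(G)$, which is immediate since those vectors are left unchanged.
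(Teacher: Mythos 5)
Your proof is correct and follows essentially the same route as the paper's: the paper likewise extends a min-plus $k$-representation of $G$ by assigning one constant vector to every vertex of $K_n$ (it uses $\vec{1}_k$ with $t=1$, and leaves the reverse inequality implicit). Your version is in fact slightly more careful, since choosing $L$ with $L+f_i(v)\ge 1$ for all $i$ and $v$ handles representations having negative coordinates (where the paper's $\vec{1}_k$ would not suffice), and your explicit restriction argument for $\rt(G)\le\rt(G\vee K_n)$ supplies a step the paper's proof omits.
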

\begin{proof} Let $\rt(G)=k$ and $f:V(G) \to \R^k$ be a tropical $k$-dot product representation of $G$. Let $H = G \vee K_n$ and let $g:V(H) \to \R^k$ be the mapping where \[
 g(v_i) = \begin{cases}
        f(v_i)  & \text{when } v_i \in V(G)\\
        \vec{1}_k   & \text{when }v_i \in K_n \\
        \end{cases}
\] 
where $(1_k)$ is the vector of entries equal to 1 of length $k$. Now the vertices of the $K_n$ are adjacent to all vertices in $H$, but the adjacencies between vertices originally in $G$ remain as they were. \end{proof}

Note that this same proof works to show that $\rth(G\vee K_n)=\rth(G)$.

\begin{thm} Let $G$ be a complete $k$-partite graph. Then $\rt(G)\le k$.\end{thm}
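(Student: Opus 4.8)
The plan is to reduce the statement to a fact about threshold dimension via the chain of equalities already established, namely $\rt(G)=\rth(\overline{G})=\Theta(\overline{G})$ from Theorem \ref{bigc}. So it suffices to show $\Theta(\overline{G})\le k$ for $G=K_{n_1,\dots,n_k}$. First I would identify the complement explicitly. Writing $P_1,\dots,P_k$ for the partite sets, the non-edges of $G$ are precisely the pairs lying inside a common $P_i$, so $\overline{G}$ is the disjoint union of $k$ cliques, $\overline{G}=K_{n_1}\cup\dots\cup K_{n_k}$, with $K_{n_i}$ supported on $P_i$.

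Next I would cover $\overline{G}$ by $k$ threshold graphs. For each $i$ let $G_i'$ be the subgraph on the full vertex set $V$ consisting of the clique on $P_i$ together with every other vertex left isolated. Each $G_i'$ is a threshold graph: using the third (sequential-construction) characterization from the Preliminaries, build it by first adding the vertices of $P_i$ one at a time, each adjacent to all previously added vertices (producing the clique), then adding the remaining vertices one at a time as isolated vertices. Since $\bigcup_{i=1}^k G_i'=\overline{G}$ because every edge of $\overline{G}$ lies in exactly one $P_i$, these $k$ threshold graphs realize $\overline{G}$, giving $\Theta(\overline{G})\le k$ and hence $\rt(G)\le k$.

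The argument is essentially routine once the machinery of Theorem \ref{bigc} is in place; the only point requiring care is verifying that a clique together with isolated vertices is genuinely a threshold graph, which the sequential-construction characterization settles immediately. As an alternative, and to make the result self-contained, I could instead give the representation directly: set coordinate $i$ of $f(v)$ to $0$ if $v\in P_i$ and to $1$ otherwise, with $t=1$. Then $f(x)\od f(y)=\min_i\{f(x)_i+f(y)_i\}$ equals $0$ when $x,y$ share a partite set and $1$ otherwise, which is exactly the adjacency condition, yielding the same bound without invoking the complement.

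Finally I would note that the inequality can be strict rather than an equality. For instance, any partite set of size $1$ contributes the edgeless graph $K_1$ to $\overline{G}$ and so may be dropped from the cover, which is why $\le k$ (and not equality) is the correct claim. I do not expect a genuine obstacle here, only the mild bookkeeping of keeping straight which pairs become edges under complementation.
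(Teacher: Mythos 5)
Your argument is correct, but your primary route differs from the paper's. The paper proves this theorem directly: it defines $f$ exactly as in your ``alternative'' paragraph (coordinate $i$ of $f(v)$ equal to $0$ if $v$ is in partite set $i$ and $1$ otherwise, with $t=1$) and verifies the two cases $f(v_p)\odot f(v_q)=1$ and $f(v_p)\odot f(v_q)=0$; so your self-contained fallback is precisely the paper's proof. Your main line of attack --- passing to $\overline{G}$, observing it is a disjoint union of $k$ cliques, covering it by $k$ threshold graphs (clique plus isolated vertices), and invoking $\rt(G)=\Theta(\overline{G})$ from Theorem \ref{bigc} --- is also sound, and is in fact the argument the paper deploys one result later, in Corollary \ref{parts}, to upgrade the bound to equality when every partite set has size greater than $1$. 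The trade-off is clear: the direct construction is elementary and needs no machinery beyond the definition of $\odot$, while your complement route leans on Theorem \ref{bigc} but positions you immediately to see why the bound is tight (the threshold dimension of a disjoint union of $k$ nontrivial cliques is exactly $k$) and why partite sets of size $1$ let it drop, which is exactly the content of the two corollaries that follow.
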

\begin{proof} Let $t=1$. Consider the function $f:V \to \R^k$ that maps vertices in partite set $i$, $1 \leq i \leq k$, to the vector with entry $i$ equal to 0 and all other entries equal to 1. Suppose $v_p$ and $v_q$ are in different partite sets; then $f(v_p) \odot f(v_q)= 1$. Suppose $v_p$ and $v_q$ are in the same partite set; then $f(v_p) \odot f(v_q)= 0$. Thus we have a tropical $k$-dot product representation of a complete $k$-partite graph. \end{proof}

If given only a little more information about the sizes of the partite sets we improve the results.

\begin{cor}\label{parts} Let $G$ be a complete $k$-partite graph with the size of each partite set greater than 1. Then $\rt(G)=k$.\end{cor}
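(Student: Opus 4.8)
The plan is to pair the upper bound $\rt(G)\le k$, supplied by the preceding theorem, with a matching lower bound $\rt(G)\ge k$; the hypothesis that every partite set has at least two vertices enters only in the lower bound. The most economical route is through the complement. By Theorem \ref{bigc}, $\rt(G)=\Theta(\overline{G})$, so it is enough to show $\Theta(\overline{G})\ge k$. First I would describe $\overline{G}$ explicitly. Since $G=K_{n_1,\ldots,n_k}$ has an edge between two vertices precisely when they lie in different partite sets, its complement has an edge precisely when two vertices share a partite set; hence $\overline{G}$ is the disjoint union of cliques $K_{n_1}\cup K_{n_2}\cup\cdots\cup K_{n_k}$. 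Because each $n_i\ge 2$, every one of these $k$ cliques carries at least one edge, so $\overline{G}$ has exactly $k$ nontrivial components.

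The core of the argument is the structural fact that a threshold graph has at most one nontrivial component: by the forbidden-subgraph characterization, two nontrivial components would let us pick an edge $ab$ from one and an edge $cd$ from another, and the four distinct vertices $\{a,b,c,d\}$ would induce a $2K_2$, which is forbidden. I would apply this to each threshold subgraph $T_j$ in an arbitrary threshold cover of $\overline{G}$. If $T_j$ contained an edge from clique $K_{n_a}$ and an edge from clique $K_{n_b}$ with $a\neq b$, then, since $T_j$ is a subgraph of $\overline{G}$, these two edges are vertex-disjoint with no edge of $\overline{G}$ (and therefore none of $T_j$) joining them; they would induce a $2K_2$ in $T_j$, contradicting that $T_j$ is threshold. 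Hence each $T_j$ can contain edges from at most one of the $k$ cliques. Since the $k$ cliques are edge-disjoint and each must contribute at least one edge to the union $\bigcup_j T_j=\overline{G}$, any threshold cover needs at least $k$ subgraphs, so $\Theta(\overline{G})\ge k$. Combined with $\rt(G)\le k$ this gives $\rt(G)=k$.

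I expect the main obstacle to be making the second step airtight, namely the passage from ``a threshold graph has only one nontrivial component'' to ``a threshold subgraph of $\overline{G}$ touches only one clique with its edges.'' The point requiring care is that the reasoning must take place inside $T_j$ rather than inside $\overline{G}$; this is exactly what the induced-$2K_2$ framing secures, since passing to a subgraph can only refine the partition of vertices into components and never merges two components, so edges drawn from distinct cliques necessarily land in distinct nontrivial components of $T_j$.
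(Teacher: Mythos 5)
Your proof is correct and follows essentially the same route as the paper: pass to $\overline{G}$, observe it is a disjoint union of $k$ cliques each with at least one edge, and conclude $\Theta(\overline{G})=k$, hence $\rt(G)=k$ via Theorem \ref{bigc}. The only difference is that you explicitly justify, via the induced-$2K_2$ argument, the fact that the paper merely asserts --- that the threshold dimension of a graph is at least its number of nontrivial components --- which is a welcome addition rather than a divergence.
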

\begin{proof} Consider $\overline{G}$ which consists of $k$ components that are complete graphs. The threshold dimension of a disconnected graph must be at least the number of components, and exactly that number when each component is a threshold graph. Since a complete graph is a threshold graph we have $\Theta(\overline{G})=k$ and hence $\rt(G)=k$ by Theorem \ref{bigc}.\end{proof}

\begin{cor} Let $G$ be a complete $k$-partite graph with $m$ $(m<k)$ of the partite sets of cardinality 1. Then $\rt(G)=k-m$.\end{cor}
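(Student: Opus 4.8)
The plan is to peel off the $m$ singleton partite sets using the join operation and thereby reduce the claim to Corollary~\ref{parts}. Write the partite sets as $V_1,\dots,V_k$, ordered so that $V_{k-m+1},\dots,V_k$ are exactly the $m$ sets of cardinality $1$ and $V_1,\dots,V_{k-m}$ are the sets of size greater than $1$. Let $G'$ be the subgraph of $G$ induced by $V_1\cup\cdots\cup V_{k-m}$. Then $G'$ is a complete $(k-m)$-partite graph in which every partite set has size greater than $1$, so Corollary~\ref{parts} applies and gives $\rt(G')=k-m$.

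Next I would establish the structural identity $G=G'\vee K_m$, where $K_m$ denotes the clique on the $m$ singleton vertices. Each singleton vertex occupies its own partite set, so it is adjacent in $G$ to every other vertex of $G$; in particular the $m$ singleton vertices are pairwise adjacent (forming $K_m$) and each is adjacent to every vertex of $G'$, while the adjacencies among the vertices of $V_1\cup\cdots\cup V_{k-m}$ are precisely those of $G'$. This is exactly the edge set of $G'\vee K_m=G'\cup K_m\cup K_{V(G'),V(K_m)}$, so $G=G'\vee K_m$. Applying Lemma~\ref{vee} with the complete graph $K_m$ then yields $\rt(G)=\rt(G'\vee K_m)=\rt(G')=k-m$, as desired.

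The computational content here is light; the main thing to get right is the bookkeeping in the structural identity $G=G'\vee K_m$, together with the observation that the hypothesis ``$m$ of the partite sets have cardinality $1$'' forces the remaining $k-m$ sets to have size greater than $1$, which is exactly what Corollary~\ref{parts} requires. I would also check the boundary case $k-m=1$, where $G'$ degenerates to a single independent set of size greater than $1$ (an edgeless threshold graph with $\rt(G')=1=k-m$), to confirm that Corollary~\ref{parts} still applies. As an alternative route, one could instead pass to complements via Theorem~\ref{bigc}: the complement $\overline{G}$ is a disjoint union of $k-m$ nontrivial complete graphs together with $m$ isolated vertices, and since isolated vertices may be absorbed into any threshold subgraph while each nontrivial complete component forces one additional threshold graph, $\Theta(\overline{G})=k-m$; this mirrors the proof of Corollary~\ref{parts}.
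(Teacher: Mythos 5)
Your proof is correct and follows essentially the same route as the paper: reduce to Corollary~\ref{parts} on the induced subgraph over the non-singleton partite sets, then absorb the $m$ singleton vertices via Lemma~\ref{vee}. The only cosmetic difference is that you join $K_m$ in a single application of the lemma where the paper joins $m$ copies of $K_1$ successively; your added checks of the structural identity $G=G'\vee K_m$ and the boundary case $k-m=1$ are welcome but not a departure in method.
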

\begin{proof} Let $\hat{G}$ be the graph induced on the $k-m$ partite sets of size greater than 1. By Corollary \ref{parts} $\rt (\hat{G}) = k-m$. Now apply Lemma \ref{vee} $m$ times to add $m$ $K_1$s to $\hat{G}$ to have a tropical $(k-m)$-dot product representation of $G$. \end{proof}

From Corollary \ref{cor:2k2} that $\rho(2K_2)< \rt(2K_2)$. Consider now $K_{m,n}$ where $n,m>2$. From Corollary \ref{parts} we know $\rt(K_{m,n})=2$ but $\rho(K_{m,n})=min\{m,n\}>2$. Thus we have examples of $\rho(G)\leq\rt(G)$ and $\rho(G)\geq\rt(G)$ so we cannot use one as a bound of the other. In \cite{Reit89} it was shown that $\rho(G)\leq \Theta(G)$ so $\rho(G)\leq \rth(G)$ for any graph.

It was also shown in \cite{Reit89} that if $T$ is a tree then $\rho(T)\leq 3$. A bound on $\rt$ for trees in general has not been established, but we have been able to establish the same bound for $\rt$ of caterpillars as for $\rho$ in \cite{Reit89}:

\begin{thm}\label{cat} Let $G$ be a caterpillar. Then $\rt(G)\le 2$.\end{thm}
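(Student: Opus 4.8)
The plan is to build an explicit two–dimensional min-plus representation $f\colon V(G)\to\R^2$ with threshold $t=1$; since $\rt(G)=\wt(G)$, this is the same as exhibiting $G$ as the intersection of two threshold graphs, and I will organize the argument around that picture. As a tree, a caterpillar is bipartite, so I first fix the bipartition $V(G)=A\cup B$ coming from distance parity along the spine, so that every edge joins $A$ to $B$. The two coordinates of $f$ will be the threshold realizations of two threshold graphs $T_1,T_2$ with $G=T_1\cap T_2$, chosen so that in $T_1$ the class $A$ is a clique and $B$ is independent, while in $T_2$ the class $B$ is a clique and $A$ is independent. The point of this choice is that intersecting automatically destroys every within-class edge: a pair inside $A$ is a clique edge of $T_1$ but a non-edge of $T_2$, and symmetrically inside $B$. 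Hence no spurious same-class adjacencies can survive, and only cross edges can remain in $T_1\cap T_2$.

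With the within-class behaviour forced, the problem reduces to arranging the cross adjacencies of $T_1$ and $T_2$ so that their intersection is exactly $E(G)$. Equivalently, I must realize the bipartite adjacency of the caterpillar as the intersection of two \emph{chain graphs} (bipartite graphs with linearly nested neighbourhoods), since a chain graph is precisely what the clique/independent-set lift turns into a threshold graph. To do this I would order each of $A$ and $B$ according to the order in which vertices are met while traversing the spine, inserting each leaf immediately beside the spine vertex it hangs from, and then take the first coordinate to be monotone along these orders in one direction and the second coordinate monotone in the opposite direction, each calibrated so that a coordinate sum reaches $1$ exactly at the intended cutoff. Along the bare spine this produces a band of width one in the two ranks, which is exactly the set of consecutive pairs, reproducing the path; this is the behaviour one sees directly in the $P_5$ case.

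The genuinely delicate step—the one I expect to be the main obstacle—is the leaf placement needed to make this two–chain (equivalently, two–staircase) decomposition correct. A leaf $\ell$ anchored at a spine vertex $s_i$ must land in the band with $s_i$ and with no other opposite-class spine vertex, and distinct leaves at the same or at neighbouring spine vertices must not be forced adjacent to each other. I would handle this by perturbing the ranks: cluster the leaves of $s_i$ around $s_i$'s rank but separate them from the ranks of $s_{i-1}$ and $s_{i+1}$, exploiting the slack available in the strict inequalities that define the non-edges. The verification then breaks into routine cases—spine--spine, spine--leaf, and leaf--leaf pairs—checking that every edge clears both thresholds and every non-edge fails at least one; the only real care is required at branch vertices carrying several leaves and at the two ends of the spine, where the band is one-sided. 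Once the ranks are fixed, $f$ is the map whose $i$th coordinate is the weight of $T_i$, and $\rt(G)\le 2$ follows. Should the bookkeeping become unwieldy, an alternative packaging is to invoke $\rt(G)=\Theta(\overline G)$ from Theorem \ref{bigc} and cover $\overline G$ by two threshold graphs instead, but the complement of a caterpillar is harder to describe cleanly, so I would keep the intersection construction as the primary route.
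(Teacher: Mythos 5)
Your plan is essentially the paper's own proof viewed through the lens of $\rt(G)=\wt(G)$: the published construction gives each spine vertex two coordinates whose ranks drift monotonically along the spine (via $d_i=\lfloor (i-1)/2\rfloor$, with the parity classes alternating which coordinate is large) and assigns each leaf the vector $(1,1)-f(p_i)$, which is precisely your intersection of two threshold (chain-lifted) graphs with one bipartition class serving as the clique in each coordinate. The leaf placement and verification you defer are exactly the spine--spine, leaf--spine, and leaf--leaf case tables the paper works through, so the approach is sound and matches the paper's.
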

\begin{proof} Proof by construction: First we will construct a min-plus dot product representation of a path of longest length. Find such a path and label the vertices $p_1, p_2, \ldots, p_m$ beginning at one end and proceeding down the path. Let $v_{i_j}$ be the $j^{th}$ leaf attached at vertex $p_i$. A caterpillar labeled in this way is shown in Figure \ref{fig:caterpillar}. Choose $k\geq 2$ and let $d_i= \lfloor \frac{i-1}2\rfloor$ for vertex $p_i$, $1 \leq i \leq m$. Then give each vertex a function value as follows:
\[
 f(p_i) = \begin{cases}
        \left(\!\!\!\begin{array}{c} \frac{1}{k+d_i}\\ \frac{k+d_i}{k+d_i+1}
                		\end{array} \!\!\!\right) & \text{when } i \text{ is odd}\\ \\
       \left(\!\!\!\begin{array}{c}\frac{k+d_i}{k+d_i+1}\\ \frac{1}{k+d_i+1}
               		\end{array} \!\!\!\right) & \text{when } i \text{ is even}\\
        \end{cases}
\] 

Consider vertices $p_j$ and $p_s$, and without loss of generality let $j < s$. If they are adjacent then by construction $f(p_j)\odot f(p_s)\geq 1$. If they are not adjacent and both $j$ and $s$ are odd then the sum of the first coordinates will be less than one, if they're both even the sum of the second coordinates will be less than 1. If they are nonadjacent and $j$ is odd and $s$ is even then $s-j \geq 3$ so $d_j<d_s$ and the sum of the second coordinates of the vectors will give $\frac{k+d_j}{k+d_j+1}+\frac{1}{k+d_s+1} < 1$ since $\frac{1}{k+d_s+1}<\frac{1}{k+d_j+1}$. Similarly if $j$ is even and $s$ is odd then the sum of the first coordinates is less than 1. For all these cases $f(p_j) \odot f(p_s) <1$ as desired.
\begin{figure}[h]
\centering
\includegraphics[scale=0.6]{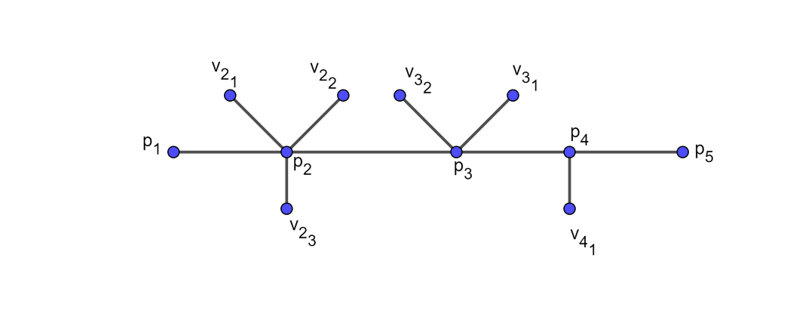}
\caption{A caterpillar with its vertices labeled}
\label{fig:caterpillar}
\end{figure}

Now for $v_{i_j}$, 
$$f(v_{i_j})= \left(\!\!\!\begin{array}{c} 1\\ 1
                		\end{array} \!\!\!\right)-f(p_i)$$
Figure \ref{fig:caterpillarrep} shows a $2$-dot product representation of $B$ using this construction with $k=2$.
\begin{figure}[h]
\centering
\includegraphics[scale=0.5]{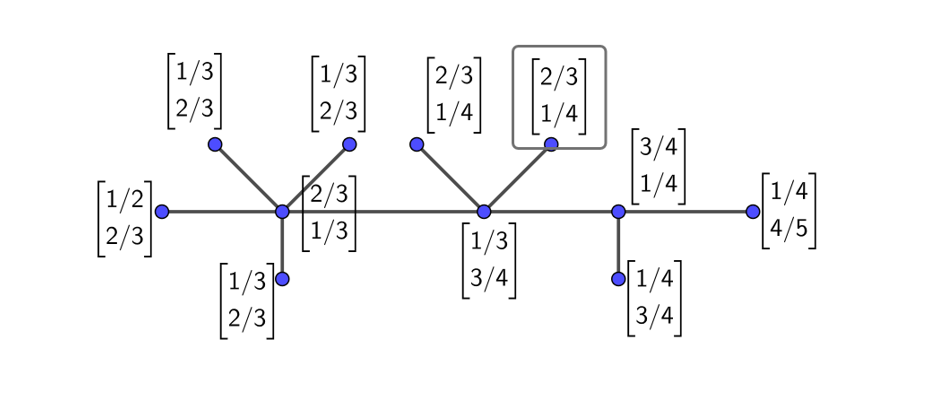}
\caption{A min-plus $2$-dot product representation of $B$}
\label{fig:caterpillarrep}
\end{figure}

By construction these leaves are adjacent to $p_i$. There are 2 other types of vertices it should not be adjacent to. In each case the progressively increasing denominator and the alternating nature of the vector coordinates cause the tropical dot product to fail to reach 1. In all these cases either the sum of one set of coordinates is of two values less than $\frac 12$ or of a value of more than $\frac 12$ and a value smaller than needed to add to 1. The following tables detail each of these cases and tells which of the sets of coordinates fails to add to 1. First consider $f(v_{i_j}) \odot f(v_{s_s})$
\[ \begin{array}{c|c|c}
i & s& \text{coordinates whose sum is } <1\\
even & even & 1^{st}\\
odd & even & 1^{st}\\
odd & odd & 2^{nd}\\
even & odd & 2^{nd}\\
\end{array}
\]
Now consider $f(v_{i_j}) \odot f(p_{s})$
\[ \begin{array}{c|c|c|c}
i & s& \text{index constraints} & \text{coordinates whose sum is } <1\\
even & odd & i\neq k & 1^{st}\\
odd & even & i\neq k & 2^{nd}\\
even & even & i < k & 2^{nd}\\
 & & i > k & 1^{st}\\
odd & odd & i < k & 1^{st}\\
 & & i>k & 2^{nd}\\
\end{array}
\]
Thus $f:V\rightarrow \R^2$ as described above produces a tropical dot product representation of dimension 2, and $\rt(G) \leq 2$ for any caterpillar. $\rt (G)=2$ when $G$ is a caterpillar that is not a threshold graph. \end{proof}

This also gives a value for non threshold subgraphs of caterpillars, i.e. paths.

\begin{cor}\label{path} $\rt(P_n)=2$ for $n>3$.\end{cor}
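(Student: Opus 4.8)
The plan is to establish the equality by combining the upper bound coming from Theorem \ref{cat} with a lower bound coming from Theorem \ref{1}. First I would observe that every path $P_n$ is a caterpillar: the path itself serves as the required spine containing an endpoint of every edge, and there are no leaves hanging off the spine. Hence Theorem \ref{cat} applies directly and yields $\rt(P_n) \le 2$. This disposes of the upper bound with no additional computation, since the explicit construction of that theorem already covers paths as a special case.

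For the lower bound I would argue that $P_n$ fails to be a threshold graph whenever $n > 3$. Indeed, for $n \ge 4$ the first four vertices of the path induce a copy of $P_4$, which is one of the forbidden induced subgraphs in $\{C_4, P_4, 2K_2\}$ characterizing threshold graphs (definition 4 of the preliminaries). Since $P_n$ contains an induced $P_4$, it is not a threshold graph. By Theorem \ref{1}, $\rt(G) = 1$ holds exactly when $G$ is a threshold graph, so $\rt(P_n) \ne 1$, and therefore $\rt(P_n) \ge 2$. Combining the two bounds gives $\rt(P_n) = 2$ for all $n > 3$. The restriction $n > 3$ is genuinely needed: $P_3$ is the star $K_{1,2}$, which is a threshold graph, so $\rt(P_3) = 1$.

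I do not anticipate a genuine obstacle, as the statement is essentially a specialization of the closing remark of Theorem \ref{cat}, namely that $\rt(G) = 2$ for any caterpillar that is not a threshold graph. The only two points requiring care are the identification of $P_n$ as a caterpillar and the verification that it is non-threshold for $n > 3$, both of which follow immediately from the induced-$P_4$ observation. As an alternative, fully self-contained route for the lower bound that avoids invoking the forbidden-subgraph list, one could argue directly that no single threshold realization $w \colon V(P_n) \to \R$ can represent $P_n$: the defining condition $w(u) + w(v) \ge 1 \iff uv \in E$ would force incompatible inequalities along any four consecutive path vertices. I would nevertheless present the forbidden-subgraph argument as the main line, since it is the shortest and leans cleanly on results already established in the excerpt.
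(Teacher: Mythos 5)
Your proposal is correct and follows exactly the paper's argument: the upper bound $\rt(P_n)\le 2$ comes from Theorem \ref{cat} since $P_n$ is a caterpillar, and the lower bound comes from Theorem \ref{1} because $P_n$ with $n>3$ contains an induced $P_4$ and so is not a threshold graph. You have merely spelled out the details that the paper's one-line proof leaves implicit.
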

\begin{proof} Since $P_n$ is a caterpillar, by Theorem \ref{cat} and Theorem \ref{1} $\rt (P_n)=2$ when $n>3$. \end{proof}

Theorem \ref{cat} also reduces the min-plus dot product dimension upper bound for any graph that has an induced subgraph
 that is a caterpillar which is at least 2 vertices larger than it's largest induced subgraph that is a threshold graph.\\

\begin{cor} \label{cycle} $\rt(C_n)\leq 3$. \end{cor}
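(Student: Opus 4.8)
The plan is to reduce the cycle to the already-settled path case by means of the vertex-deletion bound of Corollary \ref{g-v}. The key structural observation is that deleting any single vertex from $C_n$ leaves a path on $n-1$ vertices, so the dimension of the cycle is controlled by the dimension of a path that is one vertex shorter. This avoids having to build a fresh three-dimensional representation by hand.

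First I would dispose of the small cases where the path result does not yet apply. For $n = 3$ the cycle $C_3$ is the complete graph $K_3$, which is a threshold graph, so Theorem \ref{1} gives $\rt(C_3) = 1 \leq 3$. For $n = 4$ the value $\rt(C_4) = 2 \leq 3$ has already been computed. For the general case $n \geq 5$, I would choose any vertex $v \in V(C_n)$ and observe that $C_n - \{v\} = P_{n-1}$. Since $n - 1 \geq 4 > 3$, Corollary \ref{path} gives $\rt(P_{n-1}) = 2$. Applying Corollary \ref{g-v} then yields $\rt(C_n) \leq \rt(C_n - \{v\}) + 1 = \rt(P_{n-1}) + 1 = 3$, as required.

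There is no serious obstacle here; the argument is a direct composition of two earlier results. The only point requiring genuine care is the bookkeeping on small $n$: the path result applies only once the cycle has at least five vertices (so that $P_{n-1}$ has at least four), which is exactly why $n = 3$ and $n = 4$ must be handled separately, and both happen to give strictly smaller dimension anyway. I would not expect to need an explicit construction of a three-dimensional representation; a direct construction analogous to the caterpillar representation in Theorem \ref{cat} would be the natural fallback only if the vertex-deletion bound were unavailable.
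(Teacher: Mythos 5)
Your argument is correct and is essentially identical to the paper's own proof: dispose of $n\le 4$ using the known values, then for $n\ge 5$ delete a vertex to obtain $P_{n-1}$, invoke Corollary \ref{path} to get $\rt(P_{n-1})=2$, and apply Corollary \ref{g-v} to conclude $\rt(C_n)\le 3$. Your handling of the small cases is, if anything, slightly more explicit than the paper's.
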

\begin{proof} If $n\leq 4$ the tropical dot product dimension has been shown. Let $n\geq 5$. For $v\in V$, $C_n-v$ is a path, which has tropical dot product dimension 2. Then by Lemma \ref{g-v} $\rt(C_n)\leq \rt(C_n-v)+1$ hence $\rt(C_n)\leq 3$.\end{proof}

Because of the linear nature of caterpillars we are able to use the labeling system in Theorem \ref{cat} on disconnected graphs where each component is a caterpillar.

\begin{cor} Let $G$ be a graph with $k>1$ components that are caterpillars. Then $\rt(G)=2$. \end{cor}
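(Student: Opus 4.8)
The plan is to prove $\rt(G)\le 2$ and $\rt(G)\ge 2$ separately, taking each of the $k$ caterpillar components to be non-trivial (a single-vertex component contributes no edge and, attached to one genuine caterpillar, would leave a threshold graph, so the content of the statement is about honest caterpillars; isolated vertices, if allowed, are harmless for the upper bound). The lower bound is immediate: since $k>1$ and at least two components contain an edge, choosing one edge from each of two distinct components exhibits an induced $2K_2$ in $G$. By the forbidden-subgraph characterization of threshold graphs, $G$ is not a threshold graph, so Theorem \ref{1} gives $\rt(G)\ge 2$. The work is therefore entirely in the upper bound.

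For $\rt(G)\le 2$ I would recycle the construction of Theorem \ref{cat} almost verbatim, the one new idea being to lay all of the spines end to end in a single global labeling. Order the components $G_1,\dots,G_k$; in each $G_\ell$ fix a longest spine path and its leaves as in Theorem \ref{cat}. Concatenate these labelings into one sequence $p_1,p_2,\dots$ running through $G_1$, then $G_2$, and so on, but insert one unused index at every junction between consecutive components. Then set $d_i=\lfloor (i-1)/2\rfloor$ and define the spine and leaf vectors by the exact formulas of Theorem \ref{cat} with $k=2$. Because $d_i$ is monotone in the global index $i$ and the parity of a vertex is simply the parity of its position, every estimate used inside a single caterpillar in Theorem \ref{cat} applies verbatim to each contiguous block; hence all intra-component adjacencies and non-adjacencies are reproduced correctly.

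What remains is to confirm that every cross-component pair is non-adjacent, and this is where I expect the only genuine work. The construction of Theorem \ref{cat} has the key feature that two spine vertices are forced adjacent only when their indices are consecutive, and a leaf is forced adjacent only to the spine vertex sharing its index; all remaining pairs (those with distinct indices, or spine vertices at separation at least $2$) are shown to satisfy $f(x)\od f(y)<1$ via the increasing denominators $k+d_i$ and the parity analysis. Any pair of vertices from different components has distinct indices, and the inserted gap guarantees that the tail spine vertex of one block and the head spine vertex of the next sit at indices differing by at least $2$, hence of the same parity, so no cross-component pair is ever read as a consecutive-spine pair. The main obstacle is precisely this junction bookkeeping: without the skipped index the tail of one spine and the head of the next would be joined spuriously, so I must check that a single gap removes that edge while the separation-based estimates survive intact for the spine--spine, leaf--spine, and leaf--leaf cases straddling a junction. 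Once verified, the map realizes exactly the disjoint union of the caterpillars, giving $\rt(G)\le 2$, and together with the lower bound this yields $\rt(G)=2$.
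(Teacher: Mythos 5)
Your proposal is correct and follows essentially the same route as the paper: lay the spines end to end in one global labeling, skipping one index at each junction so that consecutive components' end vertices get same-parity, non-consecutive indices and hence are non-adjacent under the Theorem \ref{cat} construction. You additionally make explicit the lower bound via an induced $2K_2$ and Theorem \ref{1}, which the paper leaves implicit.
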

\begin{proof} Find a path of longest length for each caterpillar. Beginning with one caterpillar, label the vertices following the pattern from Theorem \ref{cat}. When a caterpillar is completely labeled continue the labeling on another caterpillar beginning the path labels with 2 more than the highest path label already used. 
By design the construction in Theorem \ref{cat} connects path vertex $p_i$ only to $p_{i-1}$ and $p_{i+1}$. Thus the last vertex of one caterpillar and the first of another will not be adjacent, and the rest of the labeling provides only the desired adjacencies as shown in the proof of the theorem above. \end{proof}

We have already seen that $\rho$ does not necessarily relate to $\rt$ and is a lower bound for $\rth$. It is yet to be determined if there is a bounding relationship between the min-plus and max-plus dot product dimensions of a graph.

\begin{thm} There exist graphs such that $\rt(G) \neq \rho_{\widehat{T}}(G)$ \end{thm}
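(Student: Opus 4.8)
The plan is to exhibit an explicit family of graphs on which the two parameters provably disagree, rather than argue abstractly. The cleanest witnesses are the complete bipartite graphs $K_{m,n}$ with $m,n>2$, since the results already established pin down both $\rt$ and $\rth$ for these graphs; the smallest instance is $K_{3,3}$. So I would fix $G=K_{m,n}$ with $m,n>2$ and compute the two dimensions separately.

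For $\rt(G)$: since $K_{m,n}$ is a complete $2$-partite graph in which both partite sets have size greater than $1$, Corollary~\ref{parts} applies directly and gives $\rt(K_{m,n})=2$.

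For $\rth(G)$: by Theorem~\ref{threshdim} we have $\rth(G)=\Theta(G)$, so it suffices to evaluate the threshold dimension. Because $K_{m,n}$ is bipartite it is triangle-free, so the equality case of the bound $\Theta(G)=n-\alpha(G)$ for triangle-free graphs (the result of \cite{Chvatal} recalled in Section~2) applies. Here $|V(G)|=m+n$, and the larger partite set is a maximum independent set, so $\alpha(K_{m,n})=\max\{m,n\}$; hence $\Theta(K_{m,n})=(m+n)-\max\{m,n\}=\min\{m,n\}$. Combining, $\rth(K_{m,n})=\min\{m,n\}>2=\rt(K_{m,n})$, which establishes the theorem.

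I do not expect a genuine obstacle here: the argument is a short assembly of earlier results, and the only points that require care are checking that each cited result actually applies (in particular that both partite sets exceed size $1$, which Corollary~\ref{parts} needs) and that the hypothesis $m,n>2$ is what forces $\min\{m,n\}>2$ and hence the strict inequality. As an alternative that sidesteps even computing $\alpha$, one could instead invoke the facts already cited in the surrounding text, namely $\rho(K_{m,n})=\min\{m,n\}$ together with $\rho(G)\le\rth(G)$, to conclude $\rth(K_{m,n})\ge\min\{m,n\}>2=\rt(K_{m,n})$; this weaker bound still yields the desired separation.
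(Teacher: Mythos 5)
Your proof is correct, but it uses a different witness than the paper. The paper takes $G=P_6$: it notes $\Theta(P_6)=3$, hence $\rho_{\widehat{T}}(P_6)=3$ by Theorem~\ref{threshdim}, while $\rt(P_6)=2$ by Corollary~\ref{path}. You instead take $G=K_{m,n}$ with $m,n>2$, getting $\rt(K_{m,n})=2$ from Corollary~\ref{parts} and $\rho_{\widehat{T}}(K_{m,n})=\Theta(K_{m,n})=\min\{m,n\}$ from Theorem~\ref{threshdim} together with the Chv\'atal--Hammer equality for triangle-free graphs (and your fallback via $\rho(K_{m,n})=\min\{m,n\}\le\rho_{\widehat{T}}(K_{m,n})$ is also sound). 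Both arguments are short assemblies of results already in the paper, and each step you cite does apply under your hypotheses. One thing your choice buys that the paper's single example does not make explicit: since $\min\{m,n\}$ can be taken arbitrarily large while $\rt$ stays at $2$, your family shows the gap between $\rt$ and $\rho_{\widehat{T}}$ is unbounded, not merely nonzero. The paper's $P_6$ example is slightly more economical (a smaller graph, one citation fewer), but the conclusions agree.
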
 
\begin{proof} $\Theta(P_6)=3 \Rightarrow \rho_{\widehat{T}}(P_6)=3$ but $\rt (P_6)=2$ by Corollary \ref{path}. \end{proof}

Consequently, if there is to be a bounding of one by the other, we have only one option.
\begin{conjecture} $\rt(G) \leq \rth(G)$ for all $G$. \end{conjecture}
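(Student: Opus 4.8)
The plan is to recast the conjecture entirely in terms of threshold dimension, since the two identities proved in the previous section convert every statement about $\rt$ and $\rth$ into a statement about $\Theta$. Specifically, Theorem \ref{threshdim} gives $\rth(G)=\Theta(G)$ and Theorem \ref{bigc} gives $\rt(G)=\Theta(\overline{G})$. Substituting both, the inequality $\rt(G)\le\rth(G)$ is exactly equivalent to
\[
\Theta(\overline{G})\le\Theta(G)\qquad\text{for every graph }G .
\]
Thus the first step of any attack is to reduce to this purely combinatorial inequality describing how threshold dimension behaves under complementation.

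The second step is a symmetry check, and this is where the difficulty becomes visible. The reduced inequality is not symmetric: were it to hold for all $G$, applying it to $\overline{G}$ would also give $\Theta(G)\le\Theta(\overline{G})$, and the two together would force $\Theta(G)=\Theta(\overline{G})$ for every graph. Threshold dimension is \emph{not} invariant under complementation, so rather than seeking a direct proof I would immediately search for a graph with $\Theta(\overline{G})>\Theta(G)$.

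The results already in hand furnish one. Take $G=\overline{P_6}$, so $\overline{G}=P_6$. By Theorem \ref{bigc} we have $\rt(\overline{P_6})=\Theta(P_6)=3$, while Theorem \ref{threshdim} and Theorem \ref{bigc} together with Corollary \ref{path} give $\rth(\overline{P_6})=\Theta(\overline{P_6})=\rt(P_6)=2$. Hence $\rt(\overline{P_6})=3>2=\rth(\overline{P_6})$, which contradicts the conjecture. Strikingly, this is the complement of the very graph $P_6$ whose computation $\rt(P_6)=2<3=\rth(P_6)$ motivated the conjecture: the same antipodal pair witnesses a strict inequality in \emph{both} directions.

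I therefore expect the main obstacle to be fatal rather than technical: the statement is false as written. The productive reorientation is to prove the negative result that no universal bound holds between $\rt$ and $\rth$ — the pair $\{P_6,\overline{P_6}\}$ establishes this at once — mirroring the earlier incomparability of $\rho$ and $\rt$. If a positive theorem is still wanted, the honest target is a \emph{characterization} of the graphs satisfying $\Theta(\overline{G})\le\Theta(G)$ (equivalently $\rt(G)\le\rth(G)$), for which the self-complementary graphs of Corollary \ref{comp} supply the equality locus.
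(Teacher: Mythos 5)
The paper offers no proof of this statement---it is posed as a conjecture and repeated verbatim as the first open question in the conclusion---so there is no argument of the authors' to compare yours against. What you have written is not a proof but a refutation, and as far as I can verify it is correct, using only results already established earlier in the paper. The reduction via Theorem \ref{threshdim} and Theorem \ref{bigc} to the inequality $\Theta(\overline{G})\le\Theta(G)$ is exact; the symmetry observation (that the conjecture, applied to both $G$ and $\overline{G}$, would force $\Theta(G)=\Theta(\overline{G})$ for every graph) is sound; and the witness checks out: $P_6$ is triangle-free with $\alpha(P_6)=3$, so $\Theta(P_6)=6-3=3$ by the Chv\'atal--Hammer result quoted in Section 2, while $\Theta(\overline{P_6})=\rt(P_6)=2$ by Theorem \ref{bigc} and Corollary \ref{path}. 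Hence
\[
\rt(\overline{P_6})=\Theta(P_6)=3>2=\Theta(\overline{P_6})=\rth(\overline{P_6}),
\]
so the conjecture fails at $G=\overline{P_6}$. Combined with the paper's own example $P_6$, where the strict inequality runs the other way, this shows that $\rt$ and $\rth$ are simply incomparable, mirroring the paper's earlier observation about $\rho$ and $\rt$; the sentence ``we have only one option'' is what goes wrong, since ruling out $\rth\le\rt$ as a universal bound does not make $\rt\le\rth$ one. The only caveat is that your refutation inherits whatever defects may exist in Theorems \ref{threshdim} and \ref{bigc} and in the caterpillar construction of Theorem \ref{cat}; if those results stand as claimed, the conjecture does not.
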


\section{Applications}
Applications of tropical mathematics have become more prevalent in past couple of decades. We will introduce two applications of tropical dot product graphs.

\subsection{A Min-Plus Tropical Dot Product Graph Application}
Our first application is an education application for teachers pairing students. Suppose we have a class of $n$ students and we want to pair/group the students such that they have a minimal combined competency in $k$ key skill areas. For each student, the instructor can rate each student in the key skill areas on a scale of $0-3$. Each student is rated for each skill area as follows: 0 - the student does no skills in this area, 1 - the student has some skills in this area, 2 - the student has most of the skills in this area, and 3 - the student has all of the skills in this area. Using this ratings, each student can be assign a vector with $k$ coordinates, where each coordinate corresponds to a specific key skill area. We can now choose a threshold $t$, which is the desired combined competency of students. A min-plus tropical dot product graph can be created using the student vectors and given threshold. To further demonstrate this application, consider the following example.

Consider the math class with 6 students whom we will call $A$, $B$, $C$, $D$, $E$, and $F$. The students are rated in the areas of 1) ability to solve polynomials, 2) ability to simplify algebraic expressions, 3) ability to take derivatives of functions, and 4) ability to take integrals of functions. Suppose that based on our assessments, our students are assigned the following vectors:
\begin{center}
$A=\begin{bmatrix}
2\\2\\1\\0
\end{bmatrix}$\hspace{1em}$B=\begin{bmatrix}
1\\2\\2\\1
\end{bmatrix}$\hspace{1em}$C=\begin{bmatrix}
1\\1\\3\\3
\end{bmatrix}$\hspace{1em}$D=\begin{bmatrix}
3\\3\\0\\0
\end{bmatrix}$\hspace{1em}$E=\begin{bmatrix}
2\\1\\3\\2
\end{bmatrix}$\hspace{1em}$F=\begin{bmatrix}
1\\2\\2\\3
\end{bmatrix}$
\end{center}
For our threshold, we will assign $t=3$. This specific threshold (combined competency) of $3$ will require at least one of each pair to have a score of at least $2$. In other words, at least one student will have most of the skills in each of the areas. 

The resulting graph of this representation would be Figure \ref{fig:tropapp1}.

\begin{figure}[ht]
\centering
\includegraphics[scale=.75]{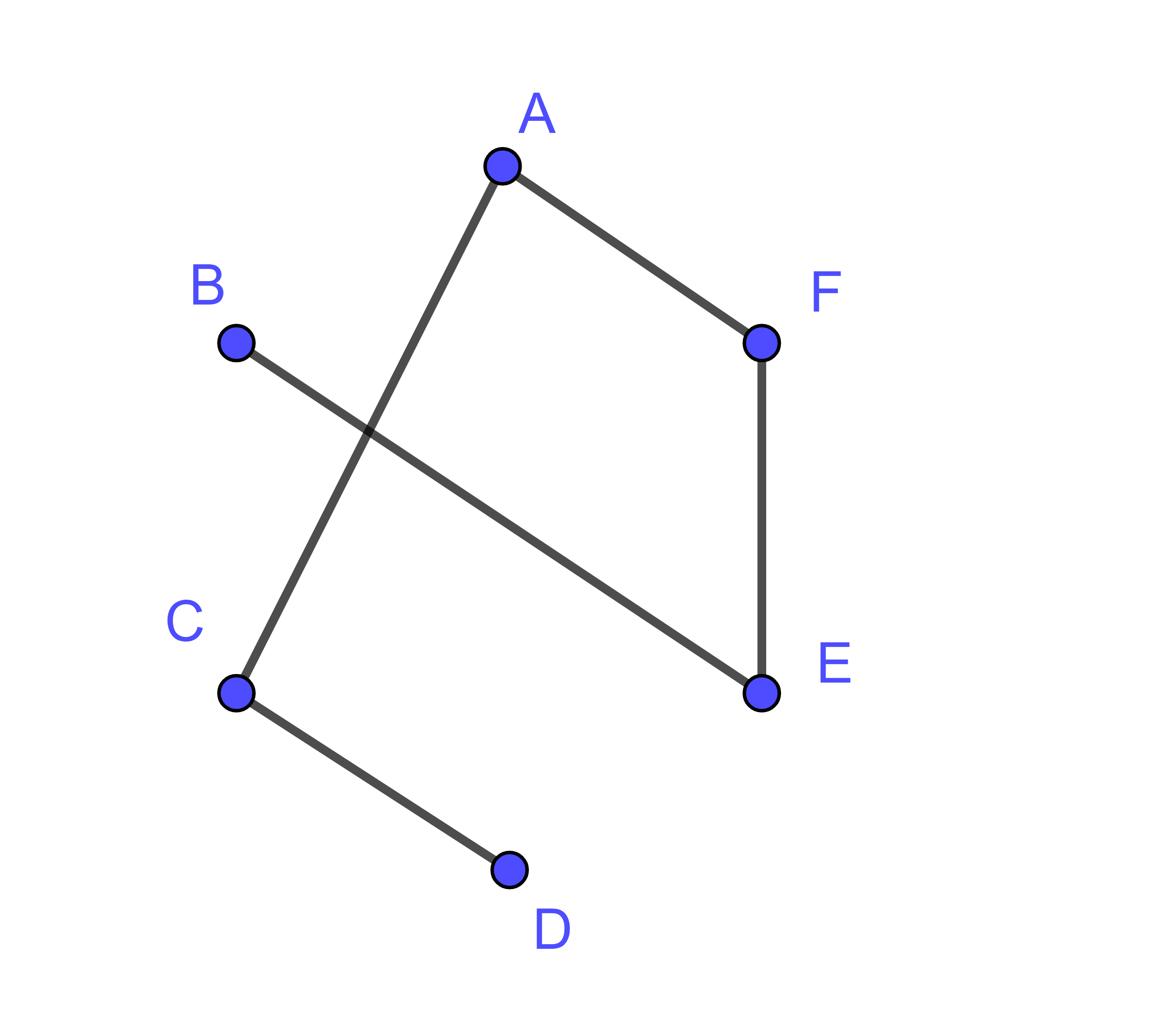}
\caption{Tropical Dot Product Graph Generated For Students}
\label{fig:tropapp1}
\end{figure}

Using Figure \ref{fig:tropapp1}, we can assign pairs by starting with starting with a vertex of minimum degree. $B$ is one such vertex in our graph. $B$ can only be paired with $E$, so our first pair is $B$ and $E$. Since $E$ has a pair, $F$ must be paired with a different student. $A$ can be paired with $F$.  Now that we have the pairs $\{B, E\}$ and $\{A,F\}$, we can examine the last two students, namely $C$ and $D$. Since the vertices are adjacent, we can pair these students. Thus our class can have the pairs $\{B, E\}$, $\{A,F\}$, and $\{C,D\}$.

This application was designated for education, but it could also be used in business to pair employees on projects.

\subsection{A Max-Plus Tropical Dot Product Graph Application}
Our second application is for investing in mutual funds. Suppose we have $n$ mutual funds we are interested in investing it. A mutual fund is a investment portfolio of different assets such as stocks, bonds, and other securities. For our specific mutual funds, we can identify $k$ key stocks, bonds, or securities of interest. For each mutual fund, we can assign a vector with $k$ coordinates, where coordinate $i$ corresponds to investment $i$ and is given the value of $1$ is the mutual fund includes that investment and $0$ otherwise. The max-plus dot product of two vectors will be $2$ if the two respective mutual funds share at least one investment. Thus the max-plus tropical dot product graph generated by these given vectors and a threshold of $2$ will help identify mutual funds both with and without common investments. To show this application, consider the following example.

We will consider 7 mutual funds $\{A, B, C, D, E, F, H\}$. The securities being considered are 1) Stock $\alpha$, 2) Stock $\beta$, 3) Bond $\delta$, 4) Bond $\gamma$, and 5) Precious Metal $\psi$. The assigned vectors would be 
\begin{center}
$A=\begin{bmatrix}
1\\0\\0\\1\\0
\end{bmatrix}$\hspace{0.75em}$B=\begin{bmatrix}
1\\1\\0\\0\\0
\end{bmatrix}$\hspace{0.75em}$C=\begin{bmatrix}
0\\0\\1\\1\\0
\end{bmatrix}$\hspace{0.75em}$D=\begin{bmatrix}
0\\0\\1\\1\\1
\end{bmatrix}$\hspace{0.75em}$E=\begin{bmatrix}
0\\1\\1\\0\\1
\end{bmatrix}$\hspace{0.75em}$F=\begin{bmatrix}
1\\0\\0\\0\\1
\end{bmatrix}$\hspace{0.75em}$H=\begin{bmatrix}
0\\1\\0\\1\\0
\end{bmatrix}$
\end{center}
The resulting graph of this representation would be Figure \ref{fig:tropapp2}.

\begin{figure}[ht]
\centering
\includegraphics[scale=.75]{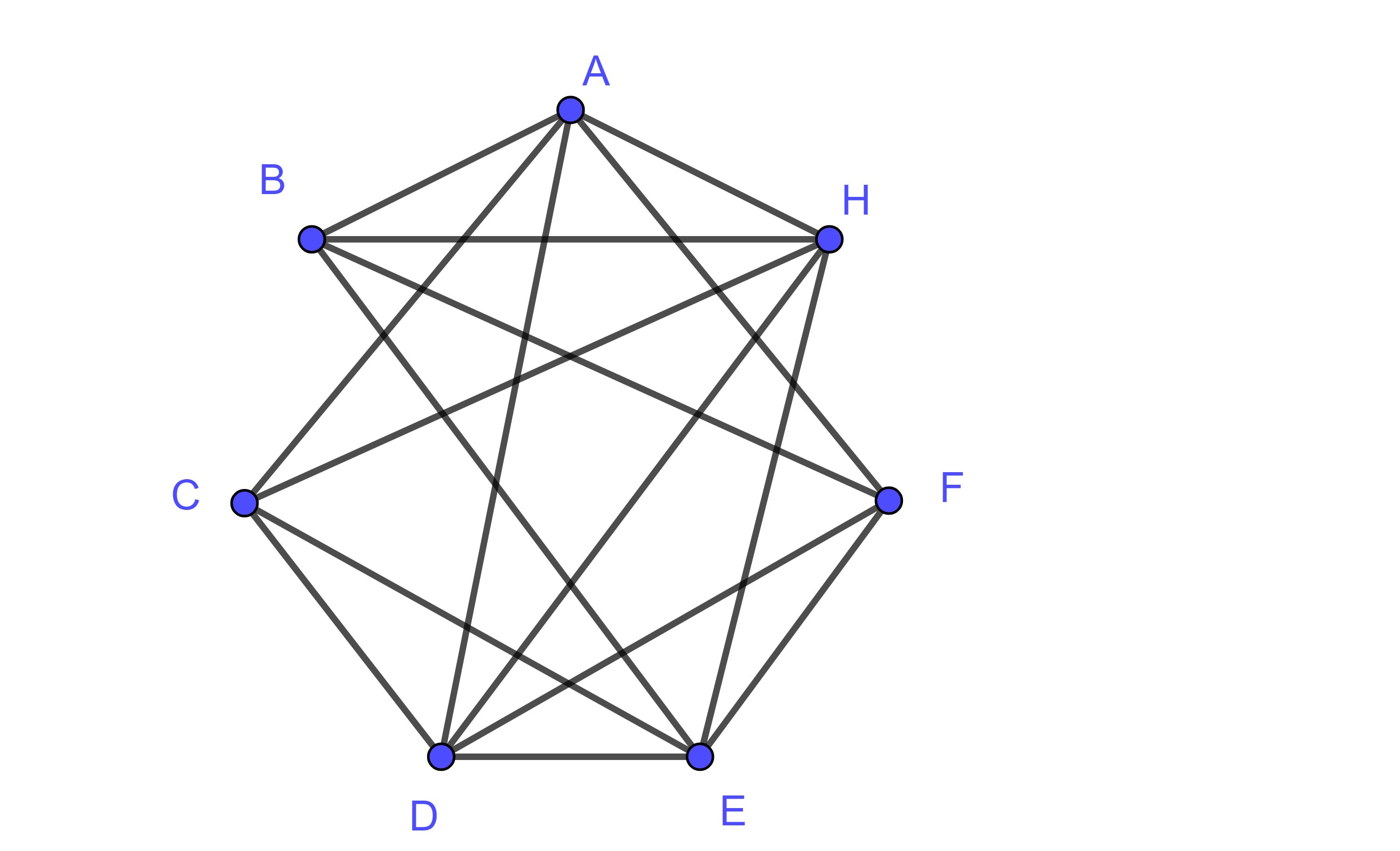}
\caption{Tropical Dot Product Graph Generated For Mutual Funds}
\label{fig:tropapp2}
\end{figure}

Examining Figure \ref{fig:tropapp1}, we can see that one of the maximum clique is $\{A, C, D, H\}$. This set of mutual funds could be invested in if you want to similar profiles. Thus these mutual funds would show some similar growths. Conversely, $\{A,E\}$ is a maximum independent set. The lack of adjacency implies that this set of mutual funds would have no similar investments for a more diverse profile.

\section{Conclusion}

\indent Much remains to be determined about tropical dot product graphs and their dimensions. We conclude this work with a list of open questions.
\begin{enumerate}
\item Is $\rt(G)\leq \rth(G)$ for all graphs?
\item Can we determine $\rth(G)$ of graphs for which $\Theta(G)$ is unknown? 
\item Characterize $k$-dot product graphs under $\T$. 
\item Does $\rt(C_n)=3$ for all $n\geq 5$? 
\item Does $\rt(T)$, where $T$ is any tree, have an upper bound?
\item We know threshold graphs are a subclass of interval graphs. Is there an upperbound for $\rt(G)$ where $G$ is an interval graph?
\item Is there a connection between other graph properties and $\rt$? If so, what is that connection?
\end{enumerate}

\bibliographystyle{abbrv}

\end{document}